\DeclareMathOperator{\tr}{tr}
\DeclareMathOperator{\vol}{vol}
\DeclareMathOperator{\diam}{diam}
\DeclareMathOperator{\ad}{ad}
\DeclareMathOperator{\p}{\partial}
\DeclareMathOperator{\area}{area}
\newcommand{\R}{\mathbb{R}}
\newcommand{\LieG}{\mathfrak{g}}
\newcommand{\LieA}{\mathfrak{a}}
\newcommand{\LieK}{\mathfrak{k}}
\newcommand{\LieP}{\mathfrak{p}}
\newcommand{\mytag}[2]{%
\text{#1}%
\@bsphack
\begingroup
\@onelevel@sanitize\@currentlabelname
\edef\@currentlabelname{%
\expandafter\strip@period\@currentlabelname\relax.\relax\@@@%
}%
\protected@write\@auxout{}{%
\string\newlabel{#2}{%
{#1}%
{\thepage}%
{\@currentlabelname}%
{\@currentHref}{}%
}%
}%
\endgroup
\@esphack
}
\DeclareFontFamily{U}{mathb}{\hyphenchar\font45}
\DeclareFontShape{U}{mathb}{m}{n}{
<5> <6> <7> <8> <9> <10> gen * mathb
<10.95> mathb10 <12> <14.4> <17.28> <20.74> <24.88> mathb12
}{}
\DeclareSymbolFont{mathb}{U}{mathb}{m}{n}
\DeclareMathSymbol{\bigast}{2}{mathb}{"06}
\def\XXint#1#2#3{{\setbox0=\hbox{$#1{#2#3}{\int}$}
\vcenter{\hbox{$#2#3$}}\kern-.5\wd0}}
\theoremstyle{plain}
\newtheorem{theorem}{Theorem}[section]
\newtheorem{proposition}[theorem]{Proposition}
\newtheorem{lemma}[theorem]{Lemma}
\newtheorem{corollary}[theorem]{Corollary}
\newtheorem{conjecture}[theorem]{Conjecture}
\theoremstyle{definition}
\newtheorem{definition}[theorem]{Definition}
\theoremstyle{remark}
\newtheorem{remark}[theorem]{Remark}
\Crefname{enumi}{Property}{Properties}
\Crefname{alternativei}{Alternative}{Alternatives}
\Crefname{subsection}{Subsection}{Subsections}
\numberwithin{equation}{section}
\begin{document}
\selectlanguage{english}


\title[Total curvature]{A total curvature estimate of closed hypersurfaces in non-positively curved symmetric spaces}

\author{Jiangtao Li}
\address{Department of Mathematics, UC San Diego, 9500 Gilman Drive, La Jolla, CA 92093-0112}
\email{jil320@ucsd.edu}

\author{Zuo Lin}
\address{Department of Mathematics, UC San Diego, 9500 Gilman Drive, La Jolla, CA 92093-0112}
\email{zul003@ucsd.edu}

\author{Liang Xu}
\address{Department of Mathematics, UC San Diego, 9500 Gilman Drive, La Jolla, CA 92093-0112}
\email{liangxu@ucsd.edu}

\date{\today}

\begin{abstract}
    In this paper, we prove a total curvature estimate of closed hypersurfaces in simply-connected non-positively curved symmetric spaces, and as a corollary, we obtain an isoperimetric inequality for such manifolds. 
\end{abstract}

\maketitle
\setcounter{tocdepth}{1}

\section{Introduction}

The Euclidean isoperimetric inequality asserts that for any compact domain $E\subseteq\R^{n+1}$ with smooth boundary $\p E$, there holds
\begin{equation}\label{eq:EuclideanIsoperimetric}
    \frac{\area(\p E)^{n+1}}{\vol(E)^n}\ge \frac{\area(\mathbb{S}^n)^{n+1}}{\vol(B^{n+1})^n}
\end{equation}
where $B^{n+1}$ is the unit $(n+1)$-ball. The equality holds if and only if $E$ is an $(n+1)$-ball. 

The following conjecture, which generalizes this classical result, appeared in \cite{Aub76,BZ13,Gro81}:

\begin{conjecture}[Cartan-Hadamard]\label{conj:CartanHadamard}
    \cref{eq:EuclideanIsoperimetric} is true for compact domain $E$ with smooth boundary $\p E$ in any Hadamard manifolds $N^{n+1}$, i.e. non-positively curved simply-connected manifolds.
\end{conjecture}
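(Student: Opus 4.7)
The plan is to mirror the strategy the paper itself follows for symmetric spaces, and try to push it to a general Hadamard manifold. The key is to reduce Conjecture~\ref{conj:CartanHadamard} to a sharp \emph{total curvature estimate}, namely
\[
\int_{\partial E}|H|^{n}\,dA\;\ge\;\area(\mathbb{S}^{n}),
\]
where $H$ is the (normalized) mean curvature of $\partial E$. Once this is established, one combines it with the Michael--Simon--Allard Sobolev inequality (valid on any Hadamard manifold with a constant depending only on $n$) and optimizes over rescalings of the test function $1$ on $E$ to recover \cref{eq:EuclideanIsoperimetric}, since sharp equality in the Chern--Lashof bound above forces the Michael--Simon constant to saturate to its Euclidean value.

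The first concrete step is to set up a \emph{visual Gauss map}. Fix a basepoint $p$ in the interior of $E$ and define $\nu:\partial E\to \U T_{p}N\cong \mathbb{S}^{n}$ by sending $x\in\partial E$ to the initial unit tangent at $p$ of the unique geodesic from $p$ to $x$; existence and uniqueness use the Cartan--Hadamard theorem. A surjectivity argument based on Busemann horofunctions shows that every direction $\xi\in\U T_{p}N$ is attained by some $x\in\partial E$ where an exterior supporting horoball exists. Applying the area formula and unraveling the Jacobian of $\nu$, one gets a pointwise identity of the form
\[
|\mathrm{Jac}\,\nu|(x)\;=\;\frac{|K(x)|}{\det\!\bigl(D\exp_{p}\bigr)_{\exp_{p}^{-1}(x)}},
\]
where $K$ is the Gauss--Kronecker curvature of $\partial E$. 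The total integral of $|\mathrm{Jac}\,\nu|$ is at least $\area(\mathbb{S}^{n})$ by surjectivity, so the key inequality to prove is a pointwise bound
\[
|K(x)|\;\le\;|H(x)|^{n}\cdot\det\!\bigl(D\exp_{p}\bigr)_{\exp_{p}^{-1}(x)}.
\]
The factor $|K|\le|H|^{n}$ is AM--GM at the points of strict convexity (which are the only points where $\nu$ contributes to the degree), so the real inequality to establish is on the Jacobian of the exponential map.

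The second step is therefore the geometric heart of the matter: one needs the Jacobian of $\exp_{p}$ at the foot of the radial geodesic to $x$ to dominate the corresponding Euclidean Jacobian evaluated against the \emph{shape operator of a Euclidean geodesic sphere through $x$}. This is where Rauch II comparison enters: in non-positive sectional curvature, Jacobi fields $J$ with $J(0)=0$ satisfy $\|J(t)\|\ge \|J^{\mathrm{eucl}}(t)\|$, which gives $\det(D\exp_{p})\ge 1$ in the expanding directions. Combining this with the standard second-variation argument that geodesic spheres are more convex in non-positive curvature than in Euclidean space yields the desired pointwise inequality and closes the chain.

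The hard part will be the very last comparison, going from Jacobi field estimates to the \emph{sharp} bound on the visual Jacobian: Rauch II delivers the inequality in the ``right'' direction for the sphere at $\infty$ but only one-sided information on the intrinsic principal curvatures of $\partial E$, and the symmetric-space hypothesis in the main body of the paper is exactly what removes this asymmetry, via explicit diagonalization of the curvature endomorphism along the restricted-root decomposition. For a general Hadamard manifold one has no such decomposition; in high dimensions even the relationship between the total curvature of $\partial E$ and the degree of $\nu$ is delicate because the singular set of $\nu$ (horospherical caustics) can be large. This is precisely why \cref{conj:CartanHadamard} is known unconditionally only through dimension four, and a proof in full generality would require a new estimate beyond two-sided Rauch comparison, most plausibly a Brunn--Minkowski type inequality for horoball neighborhoods in Hadamard manifolds.
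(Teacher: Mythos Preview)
The statement you are trying to prove is \cref{conj:CartanHadamard}, which the paper records as an \emph{open conjecture}. There is no proof in the paper to compare against: the paper only establishes the weaker \cref{thm:Main} (a total curvature bound with an exponential defect, and only for symmetric ambient spaces). So any ``proof'' of \cref{conj:CartanHadamard} would be a genuine new result, and your own final paragraph already concedes that the argument does not close in dimensions $\ge 5$.

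Beyond that, several of the intermediate steps are not correct as stated.

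\textbf{The reduction via Michael--Simon does not give the sharp constant.} You claim that once $\int_{\partial E}|H|^{n}\,dA\ge\area(\mathbb{S}^{n})$ is known, combining it with the Michael--Simon--Allard inequality and ``optimizing over rescalings of the test function $1$'' recovers \cref{eq:EuclideanIsoperimetric} with the Euclidean constant. This is false: the Michael--Simon constant is not sharp even in $\R^{n+1}$, and no choice of test function repairs that. The route from a total curvature bound to an isoperimetric inequality that \emph{does} work is the Kleiner-type ODE argument on the isoperimetric profile (as in \cref{lem:MeanCurvatureEstimate} and the proof of \cref{cor:IsoperimetricIneq}), not a Sobolev inequality.

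\textbf{The ``visual Gauss map'' you define is the wrong map.} You set $\nu(x)$ to be the initial tangent at $p$ of the geodesic from $p$ to $x$. That is simply the radial projection $\partial E\to T_p^1N$, and its Jacobian has nothing to do with the Gauss--Kronecker curvature of $\partial E$: it depends on the distance $d(p,x)$, the Jacobian of $\exp_p$, and the angle between the radial direction and the outer normal, but not on the second fundamental form of $\partial E$. The identity
\[
|\mathrm{Jac}\,\nu|(x)=\frac{|K(x)|}{\det\!\bigl(D\exp_{p}\bigr)_{\exp_{p}^{-1}(x)}}
\]
is therefore incorrect. The map that \emph{does} see $GK$ is the one in \cref{def:GaussMap}, which sends $x$ to the direction $v$ with $\nabla B_v(x)=\nu(x)$; its Jacobian is controlled via \cref{lem:EstimateDifferential} and \cref{cor:JacobianEstimates}, and precisely the Lipschitz bound on $G^x_o$ (\cref{lem:LipschitzGxo}) is where the symmetric-space hypothesis enters. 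Your later invocation of ``Busemann horofunctions'' for surjectivity and of ``supporting horoballs'' belongs to this second map, not to the radial one you wrote down, so the argument conflates two different constructions.

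\textbf{The pointwise inequality you isolate is not what is needed.} Even granting the intended Gauss map, the inequality $|K(x)|\le|H(x)|^{n}\cdot\det(D\exp_p)$ is neither the Jacobian bound one needs nor true in general: AM--GM gives $|K|\le|H/n|^{n}$ only when all principal curvatures have the same sign, and multiplying by $\det(D\exp_p)\ge 1$ goes the wrong way for an \emph{upper} bound on the Jacobian of a map into the unit sphere.

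In short: the paper does not prove \cref{conj:CartanHadamard}, your outline does not either, and the concrete steps (the Michael--Simon reduction, the definition of the Gauss map, and the Jacobian identity) each contain errors independent of the acknowledged high-dimensional gap.
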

 This conjecture was settled when the ambient dimension $n+1=2,3,4$ (cf. \cite{Wei26,Kle92,Cro80}). Some progress towards higher dimensional cases has been made over the past few decades (cf. \cite{Cro80,MJ00,GS22}).

Intuitively, the gradient of the area functional is measured by the mean curvature. Note that the mean curvature can be controlled via \emph{the total curvature estimate}
\begin{equation}\label{eq:TotalCurvature}
    \int_{\p E}|GK|d\area\ge\area(\mathbb{S}^n), 
\end{equation}
where $GK$ is the Gauss-Kronecker curvature of $\p E$. In their recent work \cite{GS22}, Ghomi and Spruck proved that the total curvature estimate implies \cref{conj:CartanHadamard} in all dimensions, which provides an important motivation to study \cref{eq:TotalCurvature}. 

Using the method of Chern-Lasholf \cite{CL57}, \cref{eq:TotalCurvature} was established in hyperbolic spaces of all dimensions in \cite{Bor02}. In \cite{Bor01}, \cref{eq:TotalCurvature} was also shown to be true in a special class of Hadamard manifolds equipped with conformally flat metrics. Except in special cases, the validity of the total curvature estimate is still unknown in general. 

Generalizing the idea of \cite{Bor01}, we prove the following result:

\begin{theorem}\label{thm:Main}
     Let $N^{n + 1}$ be a simply-connected non-positively curved symmetric space with sectional curvature bounded below by $-\kappa^2$ and $M \subseteq N$ be a closed embedded hypersurface whose diameter in $N$ is bounded above by $D$. Then the following total curvature estimate holds:
    \begin{align}\label{eq:TotalCurvatureMain}
        \int_M |GK| \, d\area_M \geq e^{-n(n+1)\kappa D}\area(\mathbb{S}^n).
    \end{align}
\end{theorem}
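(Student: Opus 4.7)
The plan is to reduce the theorem to the classical Euclidean Chern--Lashof inequality by pulling $M$ back through the exponential map at a suitable basepoint and then estimating the distortion using Jacobi field formulas available in a symmetric space.

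Fix a basepoint $p\in M$, so that $d(p,x)\le\diam_N(M)\le D$ for every $x\in M$. Since $N$ is Hadamard, $\exp_p\colon T_pN\to N$ is a diffeomorphism; identifying $T_pN$ with Euclidean $\mathbb{R}^{n+1}$, set $\tilde M:=\exp_p^{-1}(M)$, a closed embedded $C^\infty$-hypersurface of $\mathbb{R}^{n+1}$. The classical Chern--Lashof inequality applied to $\tilde M$ yields
\[
\int_{\tilde M}|\widetilde{GK}|\,d\tilde a\ \ge\ \area(\mathbb{S}^n),
\]
where $\widetilde{GK}$ and $d\tilde a$ are the Euclidean Gauss--Kronecker curvature and area element of $\tilde M$.

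The core analytic step is to establish the pointwise comparison
\[
|\widetilde{GK}|(\tilde x)\,d\tilde a(\tilde x)\ \le\ e^{n(n+1)\kappa D}\,|GK|(x)\,d\area_M(x),\qquad x=\exp_p(\tilde x),\ r:=|\tilde x|\le D.
\]
Because $N$ is a symmetric space, its curvature tensor is parallel, so along the radial geodesic $\gamma(t)=\exp_p(t\xi)$ the Jacobi equation becomes, in a parallel orthonormal frame diagonalising the curvature operator $R(\cdot,\xi)\xi$, a constant-coefficient ODE with eigenvalues $-\lambda_i^2\in[-\kappa^2,0]$, $1\le i\le n$. The singular values of $d\exp_p|_{\tilde x}\colon T_pN\to T_xN$ are therefore $1$ in the radial direction and $\sinh(\lambda_i r)/(\lambda_i r)$ in the $n$ perpendicular eigendirections, and the elementary bound $\sinh(t)/t\le e^t$ places each singular value in $[1,e^{\kappa D}]$. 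Consequently $d\exp_p|_{\tilde x}$ is an expansion, so the $n$-dimensional area ratio satisfies $d\tilde a/d\area_M\le 1$; meanwhile, parametrising both hypersurfaces by a common chart via $\phi=\exp_p\circ\tilde\phi$ and expressing the Euclidean Weingarten map of $\tilde M$ in terms of the Weingarten map of $M\subset N$ plus the correction from the second covariant derivative of $\exp_p$, a careful propagation of the bound $\sinh(t)/t\le e^t$ through each of the $n$ principal directions of the shape operator yields $|\widetilde{GK}|/|GK|\le e^{n(n+1)\kappa D}$. Integrating the pointwise estimate and combining with the Euclidean inequality above gives
\[
\area(\mathbb{S}^n)\ \le\ \int_{\tilde M}|\widetilde{GK}|\,d\tilde a\ \le\ e^{n(n+1)\kappa D}\int_M|GK|\,d\area_M,
\]
which is equivalent to \eqref{eq:TotalCurvatureMain}.

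The principal difficulty is the shape-operator comparison: while the area ratio reduces to a direct Jacobi volume estimate, relating the Weingarten operators of $M\subset N$ and $\tilde M\subset T_pN$ requires careful tracking of the second covariant derivatives of the nonisometric map $\exp_p$, equivalently a Riccati-type equation governing how curvatures transform under the exponential map. Here the symmetric space hypothesis is essential: the parallel curvature tensor ensures that Jacobi fields along every radial geodesic from $p$ solve a constant-coefficient ODE with explicit closed-form $\sinh$ and $\cosh$ solutions, which is what ultimately enables the clean exponential rate $e^{n(n+1)\kappa D}$ rather than a weaker, implicit bound.
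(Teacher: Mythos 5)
Your approach is genuinely different from the paper's (which builds a generalized Gauss map from Busemann functions, proves it is Lipschitz, and estimates its Jacobian only on the first-contact set $F$ before applying the area formula), but it has a gap that I do not think can be closed in the form you propose. The ``core analytic step'' --- the pointwise inequality $|\widetilde{GK}|(\tilde x)\,d\tilde a(\tilde x)\le e^{n(n+1)\kappa D}|GK|(x)\,d\area_M(x)$ --- is false in general. The Weingarten operators of $M\subseteq N$ and $\tilde M\subseteq T_pN$ are not related by conjugation with $d\exp_p$ alone: the second covariant derivative of $\exp_p$ contributes an \emph{additive} correction, schematically $\tilde A = P^{*}(A+Q)P$ with $Q\ne 0$ off the radial direction (this is visible already for distance spheres, whose principal curvatures are $\lambda_i\coth(\lambda_i r)$ in $N$ but $1/r$ upstairs). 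Consequently, at any point where $GK(x)=0$ --- and a generic closed hypersurface has many such points, since $GK$ must change sign unless $M$ is convex --- the pullback $\tilde M$ generically has $\widetilde{GK}(\tilde x)\ne 0$, so the left side of your inequality is positive while the right side vanishes. No multiplicative bound $|\widetilde{GK}|\le C\,|GK|$ can survive an additive shift of the shape operator; already in $\mathbb{H}^2$ the $\exp_p^{-1}$-image of an arc of geodesic not through $p$ is a curve of nonzero Euclidean curvature.

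This is precisely the difficulty the paper's argument is designed to avoid: one never compares curvatures pointwise over all of $M$, but only estimates the Jacobian of the Gauss map on the contact set $F$, where the supporting-horosphere condition forces $A\ge A-\nabla^{2}B_v\ge 0$, so that the additive correction $\nabla^{2}B_v\ge 0$ can be discarded by monotonicity of the determinant on positive semidefinite operators (\cref{lem:CompareDeterminants}). If you want to salvage your reduction to the Euclidean Chern--Lashof inequality, you would have to restrict to the contact set of $\tilde M$ with affine hyperplanes and show that the corresponding correction term there has a favorable sign and size; but the $\exp_p$-images of hyperplanes are not convex hypersurfaces of $N$ with controlled second fundamental form, whereas horospheres are ($0\le\nabla^{2}B_v\le\kappa$), which is why the Busemann-function route works and this one stalls. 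Your Jacobi-field computation of the singular values of $d\exp_p$ and the resulting area comparison $d\tilde a\le d\area_M$ are correct, but they address only the easy half of the estimate.
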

An immediate consequence is a Willmore-type inequality: 
\begin{equation}
\int_M \left| H/n \right|^n d\area_M \geq e^{-n(n+1)\kappa D} \area(\mathbb S^n), 
\end{equation}
where $H$ stands for the mean curvature of $M$. As a corollary, we also obtain the following isoperimetric inequality.

\begin{corollary}\label{cor:IsoperimetricIneq}
    Let $N^{n+1}$ be a simply-connected non-positively curved symmetric space, with sectional curvature bounded below by $-\kappa^2$, and of dimension $n+1\leq 7$. Then for any compact domain $E\subseteq N$ with smooth boundary $\p E$ and $\diam(E)\leq D$, there holds
    \begin{equation}
        \frac{\area(\partial E)^{n+1}}{\vol(E)^n} \geq e^{-2n(n+1)\kappa D}\frac{\area(\mathbb{S}^n)^{n+1}}{\vol(B^{n+1})^n}. 
    \end{equation}
\end{corollary}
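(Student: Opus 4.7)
The plan is to derive the isoperimetric inequality by applying the Willmore-type consequence of Theorem~\ref{thm:Main} to an isoperimetric minimizer, following the classical scheme. The dimensional restriction $n+1 \leq 7$ enters precisely where one invokes Almgren--De Giorgi--Federer--Simons regularity to obtain that the minimizer is a smooth closed embedded CMC hypersurface. The extra factor of $2$ in the exponent $e^{-2n(n+1)\kappa D}$ (compared with Theorem~\ref{thm:Main}) arises because the minimizer can only be confined to a geodesic ball of radius $D$, yielding a diameter bound of $2D$ rather than $D$.

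First, set $V := \vol(E)$ and pick $x_0 \in E$; since $\diam(E)\leq D$ one has $E\subseteq \bar B(x_0, D)$. I would consider the constrained isoperimetric problem of minimizing $\area(\p F)$ over sets of finite perimeter $F \subseteq \bar B(x_0, D)$ with $\vol(F) = V$. Standard $BV$ compactness and lower semicontinuity of perimeter yield a minimizer $E^*$ with $\area(\p E^*)\leq \area(\p E)$. By the interior regularity theory for minimizing codimension-one integral currents, the free part of $\p E^*$ is smooth away from a singular set of Hausdorff codimension at least $7$; since $n+1\leq 7$, this singular set is empty. Granting (see below) that $\p E^*$ does not touch the obstacle $\p B(x_0, D)$, we obtain a closed smooth embedded CMC hypersurface $\p E^* \subseteq B(x_0, D)$, and in particular $\diam(\p E^*)\leq 2D$.

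Applying the Willmore-type inequality from Theorem~\ref{thm:Main} to $\p E^*$, with the diameter upper bound $2D$, yields
\[
    \int_{\p E^*} \left| H_{E^*}/n\right|^n \, d\area \geq e^{-2n(n+1)\kappa D}\,\area(\mathbb{S}^n).
\]
The volume-preserving first-variation formula identifies $|H_{E^*}|$ with the constant $n\area(\p E^*)/((n+1)V)$. Substituting and using the Euclidean identity $(n+1)^n\area(\mathbb{S}^n) = \area(\mathbb{S}^n)^{n+1}/\vol(B^{n+1})^n$ (which follows from $\area(\mathbb{S}^n) = (n+1)\vol(B^{n+1})$) gives
\[
    \frac{\area(\p E^*)^{n+1}}{V^n} \geq e^{-2n(n+1)\kappa D}\,\frac{\area(\mathbb{S}^n)^{n+1}}{\vol(B^{n+1})^n},
\]
and the corollary follows from the minimality estimate $\area(\p E)\geq \area(\p E^*)$.

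The main obstacle is the second step, namely verifying that the constrained minimizer does not touch $\p B(x_0, D)$, so that one genuinely has a free closed CMC hypersurface upon which to apply Theorem~\ref{thm:Main}. If $\p E^*$ met $\p B(x_0, D)$ on a positive-measure contact set, then $\p E^*$ would only be smooth in pieces and the Willmore-type estimate would not apply directly. A careful maximum-principle/barrier comparison exploiting the strict mean-convexity of geodesic spheres (whose inward mean curvature in a Hadamard manifold is at least $n/D > 0$ by Rauch comparison) against the CMC $\p E^*$ should suffice, possibly supplemented by a penalization near the contact region or by passing to a slightly larger enclosing ball.
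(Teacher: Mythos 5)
There is a genuine gap at the heart of your argument: the identity $|H_{E^*}| = n\,\area(\p E^*)/((n+1)V)$ is \emph{not} a consequence of the volume-constrained first variation. The first variation only tells you that the minimizer has constant mean curvature equal to the Lagrange multiplier; the formula you quote is the Minkowski identity $\int_{\p E^*} H\langle X,\nu\rangle\,d\area = n\,\area(\p E^*)$ combined with $\int_{\p E^*}\langle X,\nu\rangle\,d\area=(n+1)\vol(E^*)$, both of which rely on the Euclidean position vector field $X$ being conformal Killing with $\nabla X = \Id$. No such vector field exists on a nonflat Hadamard manifold, and the identity fails there (e.g.\ for geodesic balls in hyperbolic space, $H = n\coth r \neq n\,\area/((n+1)\vol)$). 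Without this step you cannot convert the Willmore-type bound on $\int|H/n|^n$ into the isoperimetric ratio. The paper's substitute is the classical ODE method of Kleiner: one shows $D_-I_{E_0}(V)\geq H_{E}$ for the isoperimetric profile, bounds $H_E$ from below by $n e^{-2(n+1)\kappa\diam E}\area(\mathbb S^n)^{1/n}\area(\p E)^{-1/n}$ using the total curvature estimate (Lemma~\ref{lem:MeanCurvatureEstimate}), and then \emph{integrates} the resulting differential inequality $D_-I \geq c\, I^{-1/n}$ to obtain $I(V)^{(n+1)/n}\geq (n+1)cV$. That integration is exactly what replaces the missing Minkowski identity.

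A second, related difficulty is your reliance on the constrained minimizer being a free, smooth, closed hypersurface away from the obstacle $\p B(x_0,D)$; you flag this yourself, but the proposed barrier argument is speculative and in any case unnecessary in the paper's scheme. The paper constrains the minimizer to lie in $E_0$ itself (so contact with $\p E_0$ is expected), defines a generalized mean curvature $H_E$ via smooth supporting hypersurfaces, and applies Theorem~\ref{thm:Main} not to $\p E$ but to the $C^{1,1}$ boundaries $C_s$ of $s$-neighborhoods of the convex hull of $E$, letting $s\to 0$. This convex-hull regularization is what allows the total curvature estimate to produce a mean curvature bound for a possibly nonsmooth minimizer, and it is also where the extra factor of $2$ in the exponent actually comes from ($\diam(\mathrm{conv}\,E)\leq 2\diam E$), rather than from confinement to a ball of radius $D$ as you suggest.
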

From this it is then standard to deduce the Sobolev inequality for $f\in W^{1,1}(E)$:
\begin{equation}
    (n+1)(\vol (B^{n+1}))^\frac{1}{n+1}e^{-2n\kappa D} \left(\int_E |f|^\frac{n+1}{n} d\vol \right)^\frac{n}{n+1} \leq \int_E|\nabla f|d\vol + \int_{\p E}|f|d\area. 
\end{equation}

\subsection*{Organization of the Paper}
The proof of \cref{thm:Main} goes as follows. In \cref{sec:SymmetricSpace} and \cref{sec:Busemann}, we introduce Busemann functions on symmetric spaces and study its properties. For any closed embedded hypersurface $M^n\subseteq N^{n+1}$, in \cref{sec:GaussMap} we construct the generalized Gauss map of $M$ and discuss its regularity using Busemann functions. In \cref{sec:Jacobi}, it is proved that the Jacobian of the generalized Gauss map is bounded from above by $|GK|$ over the closed contact subset $F \subseteq M$. Finally, in \cref{sec:ProofMain}, it is shown that \cref{eq:TotalCurvatureMain} follows by applying the area formula to the generalized Gauss map restricted to $F$.

\subsection*{Notations} In this paper, we will use $T^1N$ to denote the unit sphere bundle in the tangent bundle of the Riemannian manifold $N$. For a $C^2$ function $f$ on $N$, we take $\nabla^2f$ as a $(1,1)$ tensor on $N$. If $M\subseteq N$ is a hypersurface with unit normal vector field $\nu$, we define the associated shape operator by $A(X)=\nabla_X\nu$. The Gauss-Kronecker curvature $GK(x)$ at $x\in M$ is defined as $\det A(x)$. For a linear operator $L$ on an inner product space $V$, $\|L\|_{\mathrm{op}}=\sup_{|v|=1}|Lv|$.

\section{Simply-connected non-positively curved symmetric spaces}\label{sec:SymmetricSpace}

We fix some notions related to symmetric spaces. For a comprehensive treatment of Lie groups and symmetric spaces, we refer to \cite{Kna02} and \cite{Hel78}. 

Let $(N, g)$ be a simply-connected non-positively curved symmetric space. By \cite[Chapter V, Proposition 4.2; Chapter VIII, Proposition 5.5]{Hel78}, we have the decomposition of $N$ as a Riemannian manifold
\begin{align}\label{eqn:DecompositionSymmetricHadamard}
    (N, g) \cong (\R^\ell, g_0) \times \prod_{i = 1}^{k} (N_i, g_i),
\end{align}
where $g_0$ is the standard Euclidean metric and $(N_i, g_i)$'s are irreducible non-Euclidean non-positively curved symmetric spaces. 

We now set $(N, g)$ be an \emph{irreducible} non-Euclidean non-positively curved symmetric space and $G$ be the identity component of the isometry group of $N$. For every point $x \in N$, we write $s_x$ and $K_x$ be the corresponding inversion symmetry at $x$ and the isotropic group at $x$. Let $\theta_x$ be the corresponding Cartan involution to $s_x$. We have the following Cartan decomposition for the Lie algebra $\LieG$ of $G$:
\begin{align*}
    \LieG = \LieK_x \oplus \LieP_x
\end{align*}
with $\LieK_x = \mathrm{Lie}(K_x)$ and $\LieP_x$ corresponding to eigenvalues $+1$ and $-1$ of $\theta_x$. We can naturally identify $\LieP_x$ with $T_x N$. 

Let $\beta(X, Y) = \tr(\ad X \circ \ad Y)$ be the Killing form on $\LieG$. We have the following standard $K_x$-invariant positive-definite inner product on $\LieG$: 
\begin{align*}
    \beta_{\theta_x}(v, w) := - \beta(v, \theta_x(w)).
\end{align*}
Restricting to $\LieP_x \cong T_xN$, it gives a $G$-invariant Riemannian metric on $N$ and it equals $\beta$. The irreducibility of $N$ implies that $g=\lambda\beta$ where $\lambda>0$ is a positive constant. 

Let $o \in N$ be a fixed base point. We will drop the subscript for every thing based at $o$. Let $\LieA \subseteq \LieP$ be a maximal abelian subspace. We have the following restricted-root space decomposition (cf. \cite[Proposition 6.40]{Kna02}):
\begin{align*}
    \LieG = \mathfrak{Z}(\LieA) \oplus \bigoplus_{\alpha \in \Phi} \LieG_{\alpha},
\end{align*}
where $\mathfrak{Z}(\LieA)$ is the centralizer of $\LieA$ in $\LieG$ and $\LieG_{\alpha}$ is the nonzero eigenspace corresponding to the restricted root $\alpha \in \LieA^*$. 

We relate $\lambda$ with lower bound of sectional curvature of $N$ in the following lemma. 

\begin{lemma}\label{lem:CurvatureRescale}
    Suppose $(N, g)$ is an irreducible non-Euclidean non-positively curved symmetric space with the sectional curvature $K_N \geq -\kappa^2$, we have
    \begin{align*}
        \lambda \geq \frac{\max_{\alpha \in \Phi}|\alpha|^2}{\kappa^2},
    \end{align*}
    where $|\alpha|$ is the operator norm by viewing $\alpha$ as a linear functional on $\LieA$. 
\end{lemma}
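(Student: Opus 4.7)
The plan is to test the curvature lower bound on a specific 2-plane in $T_oN \cong \LieP$ built out of each restricted root $\alpha \in \Phi$. For that plane the sectional curvature works out, via the root-space decomposition, to a clean quadratic expression in $\alpha$ divided by $\lambda$, and the inequality falls out directly.

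First I would recall that at $o$, the curvature tensor is $R(u, v)w = -[[u, v], w]$ for $u, v, w \in \LieP$, so that for $g$-orthonormal $u, v \in \LieP$,
\begin{equation*}
K(u, v) = -\lambda\, \beta\bigl([[u, v], v], u\bigr),
\end{equation*}
using $g = \lambda\beta$ on $\LieP$. Now fix $\alpha \in \Phi$, pick $H \in \LieA$ and a nonzero $X \in \LieG_\alpha$, and set $E := X - \theta X \in \LieP$. The orthogonality $\beta(H, E) = 0$ is immediate from the $\beta$-orthogonality of distinct restricted-root spaces. The identities $[H, X] = \alpha(H) X$ and $\theta \LieG_\alpha = \LieG_{-\alpha}$ give
\begin{equation*}
[H, E] = \alpha(H)(X + \theta X), \qquad \bigl[[H, E], E\bigr] = -2\alpha(H)\,[X, \theta X].
\end{equation*}
Ad-invariance of $\beta$ then produces $\beta([X, \theta X], H) = -\alpha(H)\,\beta_\theta(X, X)$; combining this with $\beta(E, E) = 2\beta_\theta(X, X)$ and the normalization $\beta(H, H) = \beta(E, E) = 1/\lambda$ (so that $H$ and $E$ become $g$-orthonormal) collapses the formula for $K$ in Step~1 to $K(H, E) = -\alpha(H)^2$.

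Maximizing $\alpha(H)^2$ over $g$-unit $H \in \LieA$ yields $|\alpha|^2/\lambda$, since $g|_\LieA = \lambda\, \beta|_\LieA$ and $|\alpha|$ denotes the $\beta$-operator norm. The hypothesis $K_N \geq -\kappa^2$ therefore forces $|\alpha|^2/\lambda \leq \kappa^2$, i.e.\ $\lambda \geq |\alpha|^2/\kappa^2$, and the lemma follows by maximizing over $\alpha \in \Phi$. I expect no serious obstacle; the main point of care is the sign bookkeeping through the nested brackets, in particular the negativity $\beta(X, \theta X) = -\beta_\theta(X, X) < 0$ for $X$ in a nontrivial root space, which flips a sign at the ad-invariance step.
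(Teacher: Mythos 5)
Your proposal is correct and follows the same route as the paper: both arguments exhibit, for each $\alpha\in\Phi$, a $2$-plane in $\LieP$ (spanned by $H\in\LieA$ and $X-\theta X$ with $X\in\LieG_\alpha$) whose sectional curvature in the Killing-form metric is $-\alpha(H)^2$, and then rescale by $\lambda$ and invoke the lower curvature bound. The only difference is that the paper cites this curvature fact from Eberlein's book, whereas you carry out the bracket computation explicitly (and your sign bookkeeping, including $\beta(X,\theta X)=-\beta_\theta(X,X)$ and the normalization $\beta(H,H)=\beta(E,E)=1/\lambda$, checks out).
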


\begin{proof}
    By \cite[P137, Example 2]{Ebe96}, $(N, \beta)$ admits a totally geodesic $2$-plane with sectional curvature $-|\alpha|^2$ for every $\alpha \in \Phi$. Then we have $\mathrm{sec}(N, \beta) \leq - \max_{\alpha \in \Phi} |\alpha|^2$. Rescaling to $(N, g)$, we have
    \begin{align*}
        \frac{-\max_{\alpha \in \Phi} |\alpha|^2}{\lambda} \geq -\kappa^2,
    \end{align*}
    which implies the lemma. 
\end{proof}

\section{Busemann functions}\label{sec:Busemann}
We briefly review properties of Busemann functions needed in this paper. We refer to \cite{BH99, BGS85, Ebe96} for a comprehensive treatment. 

\subsection{Busemann functions on Hadamard manifolds}

Let $N$ be a Hadamard manifold and let $o \in N$ be a base point. We define the Busemann function based at $o$ as the following. 

Let $v \in T_o^1N$ be a unit tangent vector at $o$. There is a geodesic $\gamma_v:\R \to N$ with $\gamma_v(0) = o$ and $(\gamma_v)'(0) = v$. The Busemann function $B_v$ is defined to be
\begin{align*}
    B_v(x) = \lim_{t \to +\infty} \bigl(d(x, \gamma_v(t)) - t\bigr).
\end{align*}
The limit always exists (cf. \cite[Lemma 8.18]{BH99}). 

We record the properties of Busemann functions on Hadamard manifolds needed in this paper in the following proposition. 
\begin{proposition}\label{pro:RegularityBusemann}
    The Busemann functions on Hadamard manifolds satisfy the following properties. 
    \begin{enumerate}
        \item The function $(v, x) \mapsto B_v(x)$ is continuous. 
        \item Fix $v$, the function $x \mapsto B_v(x)$ is a $C^2$ convex function. 
        \item Let $(N_1, g_1)$ and $(N_2, g_2)$ be two Hadamard manifolds. For every unit tangent vector in $T_{(o_1, o_2)}^1 (N_1 \times N_2)$, it has the form $(\cos(\theta))v + (\sin(\theta))w$ where $v \in T_{o_1}^1 N_1$ and $w \in T_{o_2}^1 N_2$ and $\theta \in [0, \frac{\pi}{2}]$. The corresponding Busemann function has the following form:
        \begin{align*}
            B_{\cos(\theta)v + \sin(\theta)w}((x_1, x_2)) = \cos(\theta)B_v(x_1) + \sin(\theta)B_w(x_2).
        \end{align*}
    \end{enumerate}
\end{proposition}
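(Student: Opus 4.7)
The plan is to treat the three parts separately, relying on the two workhorses of Hadamard geometry: convexity of distance functions and continuous dependence of geodesics on initial data. The bulk of the work is in (1) and (3); the $C^2$ statement in (2) is the deepest and is where I would need to invoke more than elementary comparison.

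For (1), I would first fix $t>0$ and observe that the approximating function $f_t(v,x):=d(x,\gamma_v(t))-t$ is jointly continuous in $(v,x)$, since $v\mapsto\gamma_v(t)$ is continuous by smooth dependence of geodesics on initial data. The triangle inequality
\begin{align*}
d(x,\gamma_v(t+s))\le d(x,\gamma_v(t))+s
\end{align*}
shows $t\mapsto f_t(v,x)$ is non-increasing, and $d(x,\gamma_v(t))\ge t-d(x,o)$ gives the uniform lower bound $f_t(v,x)\ge -d(x,o)$, so $B_v(x)=\inf_t f_t(v,x)$ is upper semicontinuous on $T^1_oN\times N$. For the missing lower semicontinuity I would use the standard $1$-Lipschitz estimate $|B_v(x)-B_v(y)|\le d(x,y)$ (inherited from $f_t$) to reduce joint continuity to continuity in $v$ for fixed $x$, and for the latter use that on a Hadamard manifold $t\mapsto d(\gamma_v(t),\gamma_{v'}(t))$ is convex with value $0$ at $t=0$, so geodesics with nearby initial directions stay close on any fixed bounded interval; this, combined with the monotone convergence in $t$, is enough to push $\varepsilon$'s through.

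For (2), convexity is immediate: each $f_t(v,\cdot)$ is the distance function to a point minus a constant, hence convex on the Hadamard manifold, and a pointwise limit of convex functions is convex. The $C^2$ statement is the main obstacle and is not a purely elementary property of Hadamard manifolds; I would prove it by identifying $-\nabla B_v(x)$ with the initial velocity of the unique geodesic through $x$ asymptotic to $\gamma_v$ and $\nabla^2 B_v(x)$ with the stable solution of the Riccati equation along that asymptote (equivalently, the shape operator of the horosphere through $x$ centred at $\gamma_v(\infty)$). Smooth dependence of the asymptote on $x$ then gives $C^2$. In the symmetric-space setting of the paper this can be done cleanly: the Iwasawa decomposition gives an explicit real-analytic formula for $B_v$ (up to an additive constant, $B_v$ is a linear functional on the horospherical coordinates), so the regularity is actually analytic.

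For (3), the product metric forces the geodesic with initial vector $\cos(\theta)v+\sin(\theta)w$ to split as
\begin{align*}
\gamma(t)=\bigl(\gamma_v(t\cos\theta),\,\gamma_w(t\sin\theta)\bigr),
\end{align*}
since $\cos^2\theta+\sin^2\theta=1$ certifies unit speed. Applying the product distance formula yields
\begin{align*}
d((x_1,x_2),\gamma(t))=\sqrt{d_1(x_1,\gamma_v(t\cos\theta))^2+d_2(x_2,\gamma_w(t\sin\theta))^2}.
\end{align*}
In the generic case $\theta\in(0,\pi/2)$, the definition of the Busemann function gives $d_1(x_1,\gamma_v(t\cos\theta))=t\cos\theta+B_v(x_1)+o(1)$ and $d_2(x_2,\gamma_w(t\sin\theta))=t\sin\theta+B_w(x_2)+o(1)$; expanding the square root to first order as $t\to\infty$ produces the required
\begin{align*}
d((x_1,x_2),\gamma(t))-t=\cos(\theta)B_v(x_1)+\sin(\theta)B_w(x_2)+o(1).
\end{align*}
The degenerate cases $\theta\in\{0,\pi/2\}$ are handled directly by expanding $\sqrt{r^2+c^2}-t$ with $r=t+B_v(x_1)+o(1)$ and $c=d_i(x_i,o_i)$ bounded.
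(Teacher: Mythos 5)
The paper disposes of this proposition entirely by citation (\cite[Lemma 8.18(2), Proposition 8.22, Example 8.24(3)]{BH99} and \cite[Proposition 3.1]{HIH77}), so any from-scratch argument is a different route. Your part (3) is complete and correct, as is the convexity half of (2); for the $C^2$ half you correctly identify that this is the deep point (it is precisely Heintze--Im Hof) and your Riccati/horosphere sketch is the right circle of ideas, though ``smooth dependence of the asymptote on $x$'' is essentially the statement to be proved rather than an input.

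The genuine gap is in part (1), at lower semicontinuity in $v$. Your listed ingredients --- $1$-Lipschitz continuity in $x$, closeness of $\gamma_v$ and $\gamma_{v'}$ on bounded intervals, and monotone convergence $f_t\downarrow B$ --- do not combine to give the conclusion. A decreasing limit of continuous functions is only upper semicontinuous, and the naive estimate $f_t(v',x)\ge f_t(v,x)-d(\gamma_v(t),\gamma_{v'}(t))$ is useless after taking $\inf_t$, because on a Hadamard manifold $d(\gamma_v(t),\gamma_{v'}(t))$ grows at least linearly (and in negative curvature exponentially) in $t$ for $v'\neq v$. What is actually needed, and what \cite[Lemma 8.18]{BH99} supplies, is a rate of convergence of $f_t$ to $B$ that is uniform in the ray, e.g.\ the CAT(0) comparison estimate $0\le f_t(v,x)-B_v(x)\le d(x,o)^2/(2t)$ for every unit $v\in T^1_oN$. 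With this one fixes $T$ so that $f_T(v',x)\le B_{v'}(x)+\varepsilon$ for \emph{all} $v'$, and only then uses closeness of the geodesics on $[0,T]$ to conclude $B_{v'}(x)\ge f_T(v',x)-\varepsilon\ge f_T(v,x)-2\varepsilon\ge B_v(x)-2\varepsilon$. Without that uniform estimate the ``push $\varepsilon$'s through'' step does not close.
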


\begin{proof}
    Property~(1) follows from \cite[Lemma 8.18(2)]{BH99}. 
    Property~(2) is \cite[Proposition 3.1]{HIH77} and \cite[Proposition 8.22]{BH99}. Property~(3) follows from \cite[Example 8.24(3)]{BH99}. 
\end{proof}

\subsection{Busemann functions on non-positively curved symmetric spaces}
We now prove some estimates of Hessian of Busemann functions on simply-connected non-positively curved symmetric spaces. The following is the main lemma in this subsection. 

\begin{lemma}\label{lem:HessianEstimateGeneral}
    Let $(N, g)$ be a simply-connected non-positively curved symmetric space with sectional curvature bounded below $-\kappa^2$. We have the following estimates on Hessian of Busemann functions and their differences. 

    \begin{enumerate}
        \item Let $v \in T_o^1 N$, we have that for all $x \in N$, 
    \begin{align}\label{eqn:HessianEstimateGeneral}
        \|\nabla^2 B_v(x)\|_{\mathrm{op}} \leq \kappa.
    \end{align}
        \item For any pair of unit tangent vectors $v,v'\in T_o^1 N$, we have that at $x$, 
    \begin{align}\label{eqn:HessianDifferenceEstimateGeneral}
        \|\nabla^2 B_v(x)-\nabla^2 B_{v'}(x)\|_{\mathrm{op}} \leq \kappa (\dim N)  |\nabla B_v(x) - \nabla B_{v'}(x)|.
    \end{align}
    \end{enumerate}   
\end{lemma}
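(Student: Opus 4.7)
The plan is to reduce both assertions to a single irreducible non-Euclidean factor evaluated at the basepoint, and then exploit the symmetric-space structure. By \cref{pro:RegularityBusemann}(3) together with \cref{eqn:DecompositionSymmetricHadamard}, Busemann functions on a product are weighted sums of Busemann functions on the factors, so Hessians inherit a block-diagonal form; on a Euclidean factor the Busemann function is affine and the Hessian vanishes. By homogeneity of $N = G/K$ and $G$-equivariance of Busemann functions, I can further transport $x$ to the basepoint $o$ via an isometry, and since $\nabla B_v(o) = -v$ for $v \in T_o^1 N$, the right-hand side of \cref{eqn:HessianDifferenceEstimateGeneral} becomes $|v - v'|$.

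For \cref{eqn:HessianEstimateGeneral}, I use the $K$-action at $o$ to place $v$ in a closed Weyl chamber of a maximal abelian $\LieA \subset \LieP \cong T_o N$. A Jacobi field along the geodesic $t \mapsto \exp(tv)\cdot o$, represented by a parallel-translated curve $Y:[0,\infty) \to \LieP$, satisfies $Y''(t) = (\ad v)^2 Y(t)$. Since $\ad v$ is $\beta_\theta$-self-adjoint on $\LieG$ for $v \in \LieP$, the operator $(\ad v)^2\big|_\LieP$ is positive semi-definite, with eigenvalue $\alpha(v)^2$ on the restricted root subspace $\LieP_\alpha := (\LieG_\alpha \oplus \LieG_{-\alpha}) \cap \LieP$ and $0$ on $\LieA$. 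Stable Jacobi fields therefore decay as $e^{-|\alpha(v)|\, t}$ on each $\LieP_\alpha$, and the standard identification of stable Jacobi fields with the Hessian of a Busemann function gives
\[
\nabla^2 B_v(o) \;=\; T(v) \;:=\; \sqrt{(\ad v)^2\big|_\LieP},
\]
an operator with eigenvalues $|\alpha(v)|$. Its operator norm equals $\max_\alpha |\alpha(v)| \leq \max_\alpha |\alpha|_g$, and the bound $\max_\alpha |\alpha|_g \leq \kappa$ is \cref{lem:CurvatureRescale} after translating Killing-form norms to $g$-norms via $g = \lambda \beta$.

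For \cref{eqn:HessianDifferenceEstimateGeneral}, I would interpret $T(v)$ as the restriction to $\LieP$ of the absolute value $|\ad v| := \sqrt{(\ad v)^2}$ of the $\beta_\theta$-self-adjoint operator $\ad v$ on $\LieG$; the linear map $v \mapsto \ad v$ has operator norm $\leq \kappa$, hence $\|\ad v - \ad v'\|_{\mathrm{op}} \leq \kappa |v - v'|$. To transfer this to a bound on $\|T(v) - T(v')\|_{\mathrm{op}}$, I would interpolate along a smooth path $\gamma(s)$ on the unit sphere from $v'$ to $v$, decompose $\gamma(s) = \Ad(k(s))\tilde\gamma(s)$ via polar coordinates with $k(s) \in K$ and $\tilde\gamma(s) \in \LieA$ on the regular locus, and compute $\tfrac{d}{ds}T(\gamma(s))$ using the block-diagonal form of $T(\tilde\gamma(s))$ in the fixed root decomposition of $\LieA$; each of the at most $\dim N$ root blocks contributes a Lipschitz term with constant $|\alpha|_g \leq \kappa$, and summing gives $\|\tfrac{d}{ds}T(\gamma(s))\|_{\mathrm{op}} \leq \kappa (\dim N) \cdot |\gamma'(s)|$, with the lemma following by integration.

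The main obstacle will be the wall-crossing behavior where the map $v \mapsto T(v)$ is non-smooth because the eigenvalues $|\alpha(v)|$ vanish, and where the naive spectral-perturbation bound for the square root gives only a H\"older-$1/2$ estimate due to degenerate eigenvalues. The resolution should come from the block-diagonal structure in the root-space decomposition: each eigenvalue functional $v \mapsto |\alpha(v)|$ is genuinely Lipschitz with constant $|\alpha|_g$ on its associated root block, so summing over blocks absorbs the degeneracy into the $\dim N$ safety factor.
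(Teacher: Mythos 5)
Your reduction to a single irreducible non-Euclidean factor, and your treatment of part (1), essentially reproduce the paper's argument: the paper also uses the block-diagonal form of the Hessian on products, the vanishing on Euclidean factors, the identity $\nabla^2 B_v(x) = \sqrt{\ad_u^2}\,\big|_{\LieP_x}$ with $u = \nabla B_v(x)$ (quoted from \cite{Dav23} rather than rederived via stable Jacobi fields), and \cref{lem:CurvatureRescale} to convert $\max_\alpha|\alpha|$ into $\kappa$ after the rescaling $g = \lambda\beta$. (A minor slip: after transporting $x$ to $o$ by an isometry, the relevant directions are the images of $\nabla B_v(x),\nabla B_{v'}(x)$, not $v,v'$ themselves; this is harmless since you then prove the estimate for an arbitrary pair at $o$, and the paper avoids the issue by working directly with $u, u'$ at $x$.)

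The gap is in part (2), at exactly the step you flag as the "main obstacle," and your proposed resolution does not close it. Everything reduces to the Lipschitz estimate $\bigl\|\sqrt{\ad_u^2}-\sqrt{\ad_{u'}^2}\bigr\|_{\mathrm{op}} \leq C\,\|\ad_u-\ad_{u'}\|_{\mathrm{op}}$ with $C \le \dim N$. Your interpolation argument writes $\gamma(s)=\Ad(k(s))\tilde\gamma(s)$ and bounds only the contribution of the eigenvalue functionals $|\alpha(\tilde\gamma(s))|$, i.e.\ the diagonal part of $\tfrac{d}{ds}T(\gamma(s))$ in the moving root decomposition. But the dangerous term is the off-diagonal one coming from the rotation of the blocks: differentiating $T(\gamma(s)) = \Ad(k(s))\,T(\tilde\gamma(s))\,\Ad(k(s))^{-1}$ produces a commutator $[\ad_{\dot k k^{-1}}, T]$, and $\dot k(s)$ blows up as $\tilde\gamma(s)$ approaches a wall (this is the usual degeneracy of polar coordinates), so "each root block is Lipschitz in its eigenvalue" does not control $\|\tfrac{d}{ds}T\|_{\mathrm{op}}$. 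Equivalently, in the Daleckii--Krein formula the off-diagonal entries of $\tfrac{d}{ds}\sqrt{A(s)^2}$ are the entries of $\dot A(s)$ multiplied by divided differences $\frac{|\lambda_i|-|\lambda_j|}{\lambda_i-\lambda_j}$; these are bounded by $1$, but a Schur multiplier bounded entrywise by $1$ need not be a contraction in operator norm, and passing through the Hilbert--Schmidt norm is what produces the dimensional factor. This is precisely how the paper proceeds: it invokes the operator inequality $\||A|-|B|\|_{\mathrm{op}} \le \sqrt{\dim\LieG}\,\|A-B\|_{\mathrm{op}}$ for self-adjoint $A,B$ (via \cite[Lemma 5.2]{Ara71} and singular-value decomposition), applies it to $A=\ad_u$, $B=\ad_{u'}$, and then uses $\dim\LieG \le \dim N + \dim N(\dim N-1)/2$ to get the constant $\dim N$. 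You need either this operator inequality or an honest treatment of the off-diagonal/wall-crossing term; as written, your argument only establishes the (easy) Lipschitz continuity of the spectrum, not of the operator.
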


The rest of this subsection is devoted to the proof of \cref{lem:HessianEstimateGeneral}. Proof of property~(1) follows exactly from the same lines as property~(2), so we will just present the proof of property~(2). For simplicity, we set $u = \nabla B_v(x)$ and $u' = \nabla B_{v'}(x)$.

Since Busemann functions on Euclidean spaces are linear functionals, the lemma automatically holds on Euclidean spaces. 

Now we deal with the case when $(N, g)$ is an \emph{irreducible non-Euclidean} non-positively curved symmetric space. Recall from \cref{sec:SymmetricSpace} that $g = \lambda \beta$ where $\beta$ is the Killing form associated to $G$. 

If $\lambda = 1$, we have the following calculation for Hessian of Busemann function on symmetric spaces from \cite[Lemma 4.9]{Dav23}. 
\begin{lemma}\label{lem:HessianBusemann}
    Let $v\in T^1_o N$ and $x \in N$. Recall $u = \nabla B_v(x)$. We have
    \begin{equation}\label{hessian}
        \nabla^2B_{v}(x)=\biggl(\sqrt{\ad_u^2}\biggr)\biggl|_{\LieP_x}.
    \end{equation}
    On the right hand side we are taking the unique square root of the self-adjoint semi-positive operator $\ad_u^2$ on $\LieG$ and then restrict it to the subspace $\LieP_x$. 
\end{lemma}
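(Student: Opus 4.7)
The plan is to compute $\nabla^2 B_v(x)$ by identifying it, up to a sign, with the initial derivative of the bounded (stable) Jacobi field along the asymptotic ray to $\gamma_v(\infty)$, and then to solve that Jacobi equation explicitly via the Cartan decomposition. By $G$-equivariance of both sides of the claimed identity, it suffices to work at $x = o$. Set $u = \nabla B_v(o) \in \LieP$; then $-u$ is the initial velocity of the asymptotic ray $\sigma(s) := \exp_o(-s u)$ from $o$ to $\gamma_v(\infty)$.

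First I would deform through asymptotic rays: for a curve $c(t)$ with $c(0) = o$ and $c'(0) = W$, set
\begin{equation*}
\sigma_t(s) := \exp_{c(t)}\bigl(-s\, \nabla B_v(c(t))\bigr),
\end{equation*}
so that $\sigma_0 = \sigma$ and the variation field $J(s) := \tfrac{\partial}{\partial t}\big|_{t=0} \sigma_t(s)$ is a Jacobi field along $\sigma$ with $J(0) = W$. Convexity of distance in non-positive curvature shows that $s \mapsto d(\sigma_0(s), \sigma_t(s))$ is a bounded, hence decreasing, convex function on $[0, \infty)$ with value $d(o, c(t))$ at $s = 0$; dividing by $t$ and letting $t \to 0$ gives $|J(s)| \leq |W|$ for all $s \geq 0$. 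Swapping $\tfrac{\partial}{\partial t}$ and the covariant derivative $\tfrac{D}{ds}$ at the origin then yields
\begin{equation*}
J'(0) \;=\; \tfrac{D}{dt}\Big|_{t=0}\bigl(-\nabla B_v(c(t))\bigr) \;=\; -\nabla^2 B_v(o)\, W,
\end{equation*}
which reduces the lemma to computing $J'(0)$ for this bounded Jacobi field.

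Second, I would exploit the symmetric-space structure. Parallel transport along $\sigma$ preserves the Cartan splitting, so it identifies $T_{\sigma(s)} N$ canonically with $\LieP$; under this identification and the standard symmetric-space curvature identity $R(X, Y) Y = -[[X, Y], Y]$ for $X, Y \in \LieP$, the Jacobi equation $J'' + R(J, \sigma') \sigma' = 0$ becomes the constant-coefficient ODE $J''(s) = \ad_u^2 J(s)$ on $\LieP$. Non-positive curvature makes $\ad_u^2|_\LieP$ self-adjoint and positive semi-definite with respect to $\beta$. Diagonalizing $\ad_u^2|_\LieP$, the unique bounded-as-$s\to+\infty$ solution with $J(0) = W$ is
\begin{equation*}
J(s) \;=\; e^{-s \sqrt{\ad_u^2}}\, W,
\end{equation*}
with $e^{-s\sqrt{\ad_u^2}}$ interpreted as the identity on $\ker(\ad_u^2)$. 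Differentiating at $s=0$ gives $J'(0) = -\sqrt{\ad_u^2}\, W$, and substitution into the previous step produces $\nabla^2 B_v(o) = \sqrt{\ad_u^2}\bigl|_{\LieP}$, which is the claim.

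The main obstacle will be establishing the uniform bound on $|J(s)|$ that distinguishes the bounded Jacobi field from the generic two-parameter family of solutions: it is this step that rules out the growing branch $e^{+s\sqrt{\ad_u^2}}$ on the positive-definite eigenspaces, and that forces the constant (rather than linear) solution on $\ker(\ad_u^2)$, which corresponds to the flat directions of the horosphere through $o$. The variational construction above together with the convex distance estimate between asymptotic geodesics on a Hadamard manifold supplies this bound uniformly in $s$; once it is in hand, the remainder of the argument is a direct algebraic computation inside the Cartan decomposition.
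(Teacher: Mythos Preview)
Your argument is correct: the variation through asymptotic rays produces the stable Jacobi field, the convexity estimate singles out the decaying branch of $J'' = \ad_u^2 J$, and the initial data identification $J'(0) = -\nabla^2 B_v(o)\,W$ gives the formula. The paper, however, does not prove this lemma at all; it simply quotes \cite[Lemma~4.9]{Dav23}. So there is nothing to compare beyond noting that your proof supplies what the paper outsources, via the standard stable-Jacobi-field computation of the second fundamental form of horospheres in a symmetric space.
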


Therefore, 
    \begin{align*}
        \|\nabla^2B_v(x)-\nabla^2B_{v'}(x)\|_{\mathrm{op}} = \Bigg\|\biggl(\sqrt{\ad_u^2}\biggr)\biggl|_{\LieP_x} -\biggl(\sqrt{\ad_{u'}^2}\biggr)\biggl|_{\LieP_x}\Bigg\|_{\mathrm{op}} \leq \Bigg\|\sqrt{\ad_u^2}-\sqrt{\ad_{u'}^2}\Bigg\|_{\mathrm{op}},
    \end{align*}
where on the rightmost both operators are considered as operators acting on $\LieG$. 

By \cite[Lemma 5.2]{Ara71} and singular-value decomposition, we have
\begin{align*}
    \biggl\|\sqrt{\ad_u^2}-\sqrt{\ad_{u'}^2}\biggr\|_{\mathrm{op}} \leq \sqrt{\dim \LieG} \|\ad_u - \ad_{u'}\|_{\mathrm{op}} \leq \sqrt{\dim \LieG}\max_{\alpha \in \Phi}|\alpha||u - u'|.
\end{align*}
In conclusion, when $\lambda = 1$, 
\begin{align*}
    \|\nabla^2B_v(x)-\nabla^2B_{v'}(x)\|_{\mathrm{op}} \leq \sqrt{\dim \LieG}\max_{\alpha \in \Phi}|\alpha||u - u'|.
\end{align*}
For general $\lambda > 0$, a simple rescaling shows
\begin{align*}
    \|\nabla^2B_v(x)-\nabla^2B_{v'}(x)\|_{\mathrm{op}} \leq{}& \frac{\sqrt{\dim \LieG}\max_{\alpha \in \Phi}|\alpha|}{\sqrt{\lambda}} |u - u'| \\
    \leq{}& \kappa \dim N |u - u'|\\
    ={}&\kappa \dim N |\nabla B_v(x) - \nabla B_{v'}(x)|.
\end{align*}
The last inequality follows from \cref{lem:CurvatureRescale} and the fact that $\dim \LieG \leq \dim N + (\dim N)(\dim N - 1)/2$. 

For general case, it suffices to show that if $(N, g) = (N_1, g_1) \times (N_2, g_2)$ and \cref{eqn:HessianDifferenceEstimateGeneral} holds for both $(N_1, g_1)$ and $(N_2, g_2)$, it holds for $(N, g)$. 

Indeed, property~(3) of \cref{pro:RegularityBusemann} implies
\begin{align*}
    \nabla^2B_{(\cos \theta)v_1 + (\sin \theta)v_2} =  \begin{pmatrix}
        (\cos \theta)\nabla^2B_{v_1} & \\
         & (\sin \theta)\nabla^2B_{v_2}
    \end{pmatrix}.
\end{align*}
The rest of \cref{lem:HessianEstimateGeneral} follows from Cauchy-Schwartz inequality. 

\section{Generalized Gauss map and its regularity}\label{sec:GaussMap}

Let $N^{n+1}$ be a Hadamard manifold and $M^n\subseteq N^{n+1}$ be a compact embedded hypersurface with outward unit normal vector field $\nu$. Fix a point $o$ in the bounded open subset whose boundary is $M$. We will construct a Gauss map of $M^n$ and show that it is Lipschitz continuous when $N^{n+1}$ is a non-positively curved symmetric space.

Firstly, for each unit tangent vector $u\in T^1_xN$, there exists a unique unit tangent vector $v\in T^1_oN$ such that $\nabla B_v(x)=u$ (cf. \cite[Proposition 8.2]{BH99}). We define the map $G^x_o:T^1_xN\to T^1_oN$ which maps $u$ to $v$. The effect of $G^x_o$ is translating all the fibers of $T^1N$ to a single fiber $T^1_oN$ via Busemann functions.

\begin{definition}[Gauss map]\label{def:GaussMap}
    The Gauss map of $M$ with respect to $\nu$ and $o$ is defined as $S_M:M\to T^1_oN\cong \mathbb{S}^n, S_M(x)=G^x_o(\nu(x))$. 
\end{definition}

For a general ambient space $N$, the regularity of $S_M$ is unknown. However, as we will show in \cref{thm:LipschitzGaussMap}, $S_M$ is a Lipschitz map if $N$ is a non-positively curved symmetric space. 

\begin{lemma}\label{lem:LipschitzGxo}
    Suppose $N$ is a simply-connected non-positively curved symmetric space with $K_N\ge -\kappa^2$ and $\diam(M)\leq D$. For any $x\in M$, $G^x_o:T^1_xN\to T^1_oN$ is a Lipschitz map with a uniform Lipschitz constant $e^{(n+1)\kappa D}$.
\end{lemma}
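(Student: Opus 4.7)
The plan is to establish the Lipschitz bound on $G^x_o$ by controlling its inverse $F_x \colon T^1_o N \to T^1_x N$, $v \mapsto \nabla B_v(x)$, from below. Concretely, for any $v, v' \in T^1_o N$ with images $u := F_x(v)$ and $u' := F_x(v')$, I would prove the estimate
\[
|u - u'| \ge e^{-(n+1)\kappa D} |v - v'|,
\]
which is equivalent to the desired bound $|G^x_o(u) - G^x_o(u')| \le e^{(n+1)\kappa D}|u - u'|$ for $u, u' \in T^1_x N$.

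To bound $|u - u'|$ from below, I would track the evolution of $\nabla B_v - \nabla B_{v'}$ along the unit-speed minimizing geodesic $\gamma \colon [0, L] \to N$ from $o$ to $x$, where $L = d(o,x)$. Setting
\[
\phi(t) := |\nabla B_v(\gamma(t)) - \nabla B_{v'}(\gamma(t))|,
\]
one has $\phi(0) = |v - v'|$ (since $\nabla B_v(o) = v$) and $\phi(L) = |u - u'|$. Differentiating $\phi^2$ along $\gamma$ and using the identity $\tfrac{D}{dt}\nabla B_v(\gamma(t)) = (\nabla^2 B_v)(\gamma'(t))$ together with Cauchy--Schwarz and $|\gamma'(t)| = 1$ yields
\[
\bigl|\tfrac{d}{dt}\phi(t)^2\bigr| \le 2 \|\nabla^2 B_v(\gamma(t)) - \nabla^2 B_{v'}(\gamma(t))\|_{\mathrm{op}} \cdot \phi(t).
\]
Inserting the Hessian-difference bound \cref{eqn:HessianDifferenceEstimateGeneral} from \cref{lem:HessianEstimateGeneral}(2) controls the right-hand side by $2\kappa(n+1)\phi(t)^2$, so Gronwall's inequality gives $\phi(L) \ge \phi(0) e^{-(n+1)\kappa L}$, i.e., $|u - u'| \ge e^{-(n+1)\kappa L}|v - v'|$.

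The bulk of the analytic work has already been done in \cref{lem:HessianEstimateGeneral}; once the Hessian-Lipschitz-in-$v$ estimate is in hand, the remainder is a routine differential inequality. The only remaining point is the bound $L = d(o, x) \le D$. This follows from the general fact that in a Hadamard manifold a pair $(p^*, q^*)$ realizing the diameter of the compact closure $\bar\Omega$ of the bounded region must lie in $M = \p\Omega$: an interior extremizer could be pushed in the direction of $\nabla d(\cdot, q^*)$ (which has unit length away from the diagonal in a Hadamard manifold) to strictly increase distance, contradicting maximality. Hence $L \le \diam(\bar\Omega) = \diam(M) \le D$, and the argument closes.
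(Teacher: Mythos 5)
Your proposal is correct and follows essentially the same route as the paper: both control the inverse map $v \mapsto \nabla B_v(x)$ by differentiating $|\nabla B_v(\gamma(t)) - \nabla B_{v'}(\gamma(t))|^2$ along the geodesic from $o$ to $x$, inserting the Hessian-difference estimate \cref{eqn:HessianDifferenceEstimateGeneral} of \cref{lem:HessianEstimateGeneral}, and applying Gronwall to get the exponential lower bound. Your final observation justifying $d(o,x) \le D$ via the fact that the diameter of the enclosed region is realized on the boundary is a detail the paper uses implicitly without comment.
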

\begin{proof}
    Let $u,u'\in T^1_x N$ be a pair of unit tangent vectors at $x$. For simplicity, set $v=G^x_o(u), v'=G^x_o(u')$. Then, by definition, we have $\nabla B_{v}(x)=u$ and $\nabla B_{v'}(x)=u'$. We need to estimate $|v-v'|=|\nabla B_{v}(o)-\nabla B_{v'}(o)|$ by $|u-u'|=|\nabla B_{v}(x)-\nabla B_{v'}(x)|$. Let $\gamma$ be the unit speed geodesic in $N$ from $o$ to $x$. Define $f(t)=|\nabla B_{v}(\gamma(t))-\nabla B_{v'}(\gamma(t))|^2$. Then $f'(t)$ can be estimated as follows:
     \begin{align*}
         f'(t) &{}= 2g(\nabla_{\gamma'}\nabla (B_v-B_{v'}), \nabla(B_v- 
              B_{v'}))\\
              &{}=2g(\nabla^2(B_v-B_{v'})(\nabla(B_v-B_{v'})),\gamma')\\
              &{}\ge -2\|\nabla^2(B_v-B_{v'})\|_{\mathrm{op}}\cdot|\nabla(B_v-B_{v'})|\cdot|\gamma'|\\
              &{}\ge -2(n+1)\kappa |\nabla(B_v-B_{v'})|^2\\
              &{}=-2(n+1)\kappa f(t)
     \end{align*}
     where the last inequality follows from \cref{lem:HessianEstimateGeneral} applying at point $\gamma(t)$.
     Then the standard ODE theory implies 
     \begin{align*}
         f(t)\ge e^{-2(n+1)\kappa t}|v-v'|^2.
     \end{align*}
     In particular, when $t=d_N(x,o)$, this gives
     \begin{align*}
         |u-u'|\ge e^{-(n+1)\kappa d_N(x,o)}|v-v'|\ge e^{-(n+1)\kappa D}|v-v'|.
     \end{align*}
     This finishes the proof.
\end{proof}

Given the above lemma, we can prove the Lipschitz continuity of $S_M$.

\begin{theorem}[Lipschitz continuity]\label{thm:LipschitzGaussMap}
    Under the assumptions of \cref{lem:LipschitzGxo}, the Gauss map $S_M$ is Lipschitz continuous.
\end{theorem}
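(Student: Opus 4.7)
The plan is to reduce the comparison of $S_M(x)$ and $S_M(y)$ to a single tangent space so that \cref{lem:LipschitzGxo} applies directly, absorbing the change of base point into the Hessian bound \cref{lem:HessianEstimateGeneral}(1). Fix $x,y\in M$, set $v=S_M(x)$ and $v'=S_M(y)$, and define the auxiliary vector $u'':=\nabla B_{v'}(x)\in T^1_xN$; this lies in the unit tangent bundle since Busemann gradients have unit norm on any Hadamard manifold. The defining property of $G^x_o$ gives $G^x_o(u'')=v'$, and therefore
\begin{align*}
|S_M(x)-S_M(y)|=|G^x_o(\nu(x))-G^x_o(u'')|\le e^{(n+1)\kappa D}\,|\nu(x)-u''|,
\end{align*}
where the last inequality is \cref{lem:LipschitzGxo}.

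The next step is to bound $|\nu(x)-u''|$ by parallel transporting to a common tangent space. Let $\gamma$ be a minimizing geodesic in $N$ from $y$ to $x$ and let $P\colon T_yN\to T_xN$ denote parallel transport along $\gamma$. Since $\nabla B_{v'}$ is a smooth unit vector field with $\|\nabla^2 B_{v'}\|_{\mathrm{op}}\le\kappa$ by \cref{lem:HessianEstimateGeneral}(1), the same kind of ODE argument as in the proof of \cref{lem:LipschitzGxo}, applied along $\gamma$, yields
\begin{align*}
|P(\nabla B_{v'}(y))-\nabla B_{v'}(x)|\le\kappa\, d_N(x,y).
\end{align*}
Because $\nabla B_{v'}(y)=\nu(y)$ by definition of $v'=S_M(y)$, the triangle inequality gives
\begin{align*}
|\nu(x)-u''|\le|\nu(x)-P(\nu(y))|+\kappa\, d_N(x,y).
\end{align*}

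The residual term $|\nu(x)-P(\nu(y))|$ is controlled purely by the extrinsic geometry of $M$. Since $M$ is a compact smooth embedded hypersurface, $\nu$ is a smooth unit normal field; a standard compactness argument using the second fundamental form of $M$ produces a constant $C_\nu$, depending only on $M$, such that $|\nu(x)-P(\nu(y))|\le C_\nu\, d_N(x,y)$ for all $x,y\in M$. Assembling the three estimates establishes Lipschitz continuity of $S_M$ with constant $e^{(n+1)\kappa D}(\kappa+C_\nu)$.

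The main conceptual move is the opening identity $v'=G^x_o(\nabla B_{v'}(x))$, which shifts the comparison to a single fiber at $x$ and lets \cref{lem:LipschitzGxo} do the heavy lifting. After that, only the single-function Hessian bound of \cref{lem:HessianEstimateGeneral}(1) is required, not the difference estimate; the extrinsic smoothness of $\nu$ is a purely local matter handled by compactness. I expect the subtlest part of the write-up to be stating the parallel-transport comparison along $\gamma$ cleanly, since $\gamma$ is a geodesic in $N$ rather than in $M$, but no geometric obstruction arises because both $\nabla B_{v'}$ and $\nu$ are only being compared as ambient vector fields along an ambient geodesic.
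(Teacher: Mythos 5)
Your proposal is correct in substance and shares the paper's key identity --- rewriting both Gauss map values as images under a single translation map ($S_M(y)=G^x_o(\nabla B_{S_M(y)}(x))$, resp.\ $S_M(x)=G^{\eta(t)}_o(\nabla B_{S_M(x)}(\eta(t)))$ in the paper) so that \cref{lem:LipschitzGxo} applies --- but it globalizes differently. The paper works infinitesimally: it bounds $\limsup_{t\to0}|S_M(\eta(t))-S_M(x)|/|t|$ along $M$-geodesics, where the limit becomes $|A(\eta'(0))-\nabla^2B_{S_M(x)}(\eta'(0))|\le\sup_M|A|+\kappa$, and then chains these local bounds along a minimizing $M$-geodesic by a covering argument, yielding a Lipschitz bound with respect to $d_M$ and the explicit constant $e^{(n+1)\kappa D}(\sup_M|A|+\kappa)$. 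You instead make a direct two-point estimate via parallel transport along the ambient geodesic, which buys a formally stronger conclusion (Lipschitz with respect to $d_N\le d_M$) and avoids the covering step; steps 1--4 of your argument (the identity $G^x_o(u'')=v'$, the application of \cref{lem:LipschitzGxo}, and the bound $|P(\nabla B_{v'}(y))-\nabla B_{v'}(x)|\le\kappa\,d_N(x,y)$ from $\|\nabla^2B_{v'}\|_{\mathrm{op}}\le\kappa$) are all sound.

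The one soft spot is the final claim $|\nu(x)-P(\nu(y))|\le C_\nu\,d_N(x,y)$ with $P$ transport along the \emph{ambient} geodesic. Differentiating $\nu$ via the second fundamental form, as you suggest, only controls the variation of $\nu$ under parallel transport along curves \emph{in} $M$, giving a bound in terms of $d_M(x,y)$; to convert this to $d_N(x,y)$ one must additionally observe that for a fixed compact embedded $M$ the two distances are comparable near the diagonal and that pairs with $d_M\ge\delta$ have $d_N$ bounded below (ruling out ``thin neck'' pairs with nearly opposite normals and tiny ambient distance), and one must also account for the holonomy discrepancy between transporting along the $M$-path and along the $N$-geodesic. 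The statement is true and provable by exactly such a compactness argument, so this is a gap in exposition rather than a flaw, but it is the only nontrivial input beyond the paper's own lemmas. You could sidestep it entirely by running your argument along a minimizing $M$-geodesic $\eta$ from $y$ to $x$ instead of the ambient geodesic: the Busemann estimate still gives $|P_\eta(\nabla B_{v'}(y))-\nabla B_{v'}(x)|\le\kappa\,d_M(x,y)$, while $|\nu(x)-P_\eta(\nu(y))|\le\sup_M\|A\|_{\mathrm{op}}\,d_M(x,y)$ follows immediately from $\nabla_{\eta'}\nu=A(\eta')$; this recovers the paper's constant with no covering argument and no extra extrinsic fact, and Lipschitz continuity with respect to $d_M$ is all that the later applications (Rademacher, area formula) require.
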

\begin{proof}
    Let $x\in M$ be any point and $\eta$ be a geodesic of $M$ such that $\eta(0)=x$. Then we have 
    \begin{align*}
         &{} \limsup_{t\to 0}\frac{1}{|t|}\left|S_M(\eta(t))-S_M(x)\right|\\
        &{}= \limsup_{t\to 0}\frac{1}{|t|}\left|G^{\eta(t)}_o(\nu(\eta(t)))-G^{\eta(t)}_o(\nabla B_{S_M(x)}(\eta(t)))\right|\\
        &{}\leq e^{(n+1)\kappa D}\lim_{t\to 0}\frac{1}{|t|}\left|\nu(\eta(t))-\nabla B_{S_M(x)}(\eta(t))\right|\\
        &{}\leq e^{(n+1)\kappa D}\left|\nabla_{\eta'(0)}\nu-\nabla_{\eta'(0)}\nabla B_{S_M(x)}\right|\\
        &{}= e^{(n+1)\kappa D}\left|A(\eta'(0))-\nabla^2B_{S_M(x)}(\eta'(0))\right|\\
        &{}\leq e^{(n+1)\kappa D}(\sup_M|A| + \kappa)\\
        &{}=: L(M,N).
    \end{align*}
    The first inequality is a consequence of \cref{lem:LipschitzGxo} and the last inequality follows from \cref{lem:HessianEstimateGeneral}. 
    
    The above calculation shows that there is $\epsilon_x>0$ depending on $x$, such that 
    \[|S_M(\eta(t))-S_M(x)|<(L(M,N)+1)|t|, \forall t\in (-\epsilon_x,\epsilon_x).\]

    Now let $x,y\in M$ be any pair of points and let $\eta$ be a unit speed minimizing geodesic of $M$ such that $\eta(0)=x,\eta(d)=y$, where $d=d_M(x,y)$. From above we know that for any $t\in[0,d]$, there is $\epsilon_t$, such that $|S_M(\eta(\tau))-S(\eta(t))|<(L(M,N)+1)|\tau-t|, \forall \tau\in(t-\epsilon_t,t+\epsilon_t)$. Therefore, by compactness, there exist finitely many times $0=t_0<t_1<\cdots<t_l=d$ such that $|S_M(\eta(t_{i+1}))-S_M(\eta(t_i))|<(L(M,N)+1)(t_{i+1}-t_i)$. Hence,
    \begin{align*}
        |S_M(y)-S_M(x)|&\leq\sum_{i=0}^l|S_M(\eta(t_{i+1}))-S_M(\eta(t_i))|\\
                   &\leq\sum_{i=0}^l (L(M,N)+1)(t_{i+1}-t_i)\\
                   &=(L(M,N)+1)d_M(x,y).
    \end{align*}
    Namely, $S_M$ is a Lipschitz map.
\end{proof}

\section{An estimate on the Jacobian}\label{sec:Jacobi}

We assume that $N^{n+1}$ is a simply-connected non-positively curved symmetric space with $K_N\ge -\kappa^2$ and $M^n\subseteq N^{n+1}$ is a closed hypersurface with $\diam(M)\leq D$ through this section.

The Lipschitz continuity proved in \cref{thm:LipschitzGaussMap} guarantees the existence of the differential $dS_M$ almost everywhere, thanks to the Rademacher's theorem (cf. \cite[Theorem 5.2]{Sim83}). We show that the Jacobian is controlled by the Gauss-Kronecker curvature $GK$ of $M$ on a subset $F$.

For each $v\in T^1_oN$, we consider the level hypersurfaces of the associated Busemann function $B_v$. Since $B_v$ is at least $C^2$ and $|\nabla B_v|\equiv 1$, this is a one-parameter family of $C^2$ hypersurfaces which foliates $N$. We consider those first contact points when the level decreases. More precisely, define $c_v=\inf\{c\in\mathbb{R}:B_v^{-1}(s)\cap M=\emptyset, \forall s>c\}<\infty$ and $F_v=B^{-1}_v(c_v)\cap M\ne\emptyset$. Set 
\begin{equation}
    F=\cup_{v\in T^1_oN}F_v.
\end{equation}
Then $F$ consists of first contact points in all directions.

\begin{remark}\label{rmk:PropertiesSetF}
    By the construction of $F$ it is straightforward that $S_M$ maps $F$ onto $T^1_oN\cong\mathbb{S}^n$. In addition, it is not hard to prove that $F$ is a closed subset of $M$ from \cref{pro:RegularityBusemann} property~(1). 
\end{remark}
The following lemma gives an estimate of $dS_M$ on $F$.
\begin{lemma}\label{lem:EstimateDifferential}
    Let $x\in F$ and suppose $S_M$ is differentiable at $x$. For any tangent vector $w\in T_xM$, the following inequality holds:
    \begin{equation}
        |dS_M(w)|\leq e^{(n+1)\kappa D}|A(w)-(\nabla^2 B_{S_M(x)})(w)|.
    \end{equation}
\end{lemma}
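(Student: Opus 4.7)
The plan is to carry out, at the point $x$ of differentiability, exactly the same computation that was used to establish Lipschitz continuity in \cref{thm:LipschitzGaussMap}; differentiability of $S_M$ at $x$ converts the $\limsup$ estimate there into a genuine bound on the differential $dS_M$. Concretely, I fix a smooth curve $\eta:(-\epsilon,\epsilon)\to M$ with $\eta(0)=x$ and $\eta'(0)=w$, so that
\[
dS_M(w)=\frac{d}{dt}\bigg|_{t=0}S_M(\eta(t)), \qquad S_M(\eta(t))=G^{\eta(t)}_o(\nu(\eta(t))).
\]

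The key identity is that for every $t$, by the uniqueness statement of \cite[Proposition 8.2]{BH99} used in \cref{def:GaussMap},
\[
G^{\eta(t)}_o\bigl(\nabla B_{S_M(x)}(\eta(t))\bigr)=S_M(x),
\]
since $\nabla B_{S_M(x)}(\eta(t))$ is a unit tangent vector at $\eta(t)$ whose unique preimage under the Busemann-gradient identification is $S_M(x)$ itself. Subtracting, I write
\[
S_M(\eta(t))-S_M(x)=G^{\eta(t)}_o(\nu(\eta(t)))-G^{\eta(t)}_o\bigl(\nabla B_{S_M(x)}(\eta(t))\bigr),
\]
and invoke \cref{lem:LipschitzGxo}, whose Lipschitz constant $e^{(n+1)\kappa D}$ is uniform in the base point. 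Dividing by $|t|$ and using the assumed differentiability of $S_M$ at $x$ yields
\[
|dS_M(w)|\leq e^{(n+1)\kappa D}\,\lim_{t\to 0}\frac{|\nu(\eta(t))-\nabla B_{S_M(x)}(\eta(t))|}{|t|}.
\]

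Since $\nu(x)=\nabla B_{S_M(x)}(x)$ by the very definition of $S_M$, and both $\nu$ (smooth because $M$ is a smooth hypersurface) and $\nabla B_{S_M(x)}$ (a $C^1$ gradient field by \cref{pro:RegularityBusemann}(2)) are differentiable along $\eta$, the limit on the right equals $|\nabla_w\nu-\nabla_w\nabla B_{S_M(x)}|$. Using the defining identities $\nabla_w\nu=A(w)$ for the shape operator and $\nabla_w\nabla B_{S_M(x)}=(\nabla^2 B_{S_M(x)})(w)$ for the Hessian viewed as a $(1,1)$-tensor gives the desired inequality.

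I do not expect a serious obstacle: the argument is a direct infinitesimal version of the Lipschitz estimate in \cref{thm:LipschitzGaussMap}. The only points to watch are that $e^{(n+1)\kappa D}$ is a Lipschitz constant that does not depend on which fiber $T^1_{\eta(t)}N$ we are straightening from, and that the resulting comparison of norms makes sense across the tangent spaces $T_oN$ (where $dS_M(w)$ lives) and $T_xN$ (where $A(w)-\nabla^2 B_{S_M(x)}(w)$ lives) via the $G$-invariant Riemannian metric. The hypothesis $x\in F$ is not used here; its role will appear in the next step, where maximality of $B_{S_M(x)}$ along $M$ at $x$ forces $A-\nabla^2 B_{S_M(x)}$ to be positive semi-definite on $T_xM$ and thereby upgrades the bound above to a Jacobian estimate against $|GK|$.
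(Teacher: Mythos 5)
Your proposal is correct and follows essentially the same route as the paper: rewrite $S_M(x)$ as $G^{\eta(t)}_o(\nabla B_{S_M(x)}(\eta(t)))$, apply the uniform Lipschitz bound of \cref{lem:LipschitzGxo}, and identify the resulting limit with $|A(w)-\nabla^2 B_{S_M(x)}(w)|$. Your observation that the hypothesis $x\in F$ is not actually used in this step (only later, for the semi-definiteness needed in \cref{cor:JacobianEstimates}) is also accurate.
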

\begin{proof}
    The proof follows from the same calculation as we did in the proof of \cref{thm:LipschitzGaussMap}. Let $\eta(t)$ be a curve on $M$ such that $\eta(0)=x$ and $\eta'(0)=w$. Then 
    \begin{align*}
        |dS_M(w)|&=\lim_{t\to 0}\frac{1}{|t|}\left|S_M(\nu(\eta(t)))-S_M(x)\right|\\
               &=\lim_{t\to 0}\frac{1}{|t|}\left|G^{\eta(t)}_o(\nu(\eta(t)))-G^{\eta(t)}_o(\nabla B_{S_M(x)}(\eta(t)))\right|\\
               &\leq e^{(n+1)\kappa D}\lim_{t\to 0}\frac{1}{|t|}\left|\nu(\eta(t))-\nabla B_{S_M(x)}(\eta(t))\right|\\
               &=e^{(n+1)\kappa D}|A(w)-\nabla^2B_{S_M(x)}(w)|. \qedhere
    \end{align*}
\end{proof}    

\begin{lemma}\label{lem:CompareDeterminants}The following algebraic facts hold.
    \begin{enumerate}
        \item  If $M$ and $N$ are two $n\times n$ matrices and 
            \begin{align*}
                |Mv|\leq|Nv|, \forall v\in\mathbb{R}^n,
            \end{align*}
        then $|\det M|\leq|\det N|$.
        \item If $M$ and $N$ are semi-positive definite and $M\ge N\ge 0$, then $\det M\ge \det N$.
    \end{enumerate}
   
\end{lemma}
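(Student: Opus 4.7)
The plan is to prove part (2) first and then reduce part (1) to it. Part (2) is a standard consequence of the Weyl monotonicity principle for eigenvalues of symmetric matrices, and part (1) follows by applying (2) to the Gram matrices $M^T M$ and $N^T N$.

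For part (2), I would invoke the Courant--Fischer min-max characterization: for any real symmetric $n \times n$ matrix $A$ with eigenvalues $\lambda_1(A) \geq \cdots \geq \lambda_n(A)$,
\[
\lambda_k(A) \;=\; \max_{\substack{V \subseteq \R^n \\ \dim V = k}} \;\min_{\substack{v \in V \\ |v| = 1}}\; \langle Av, v\rangle.
\]
The hypothesis $M \geq N$ means $\langle Mv, v\rangle \geq \langle Nv, v\rangle$ for every $v$, so the min-max formula gives $\lambda_k(M) \geq \lambda_k(N)$ for each $k$. Since $N \geq 0$, all $\lambda_k(N)$, and hence all $\lambda_k(M)$, are non-negative. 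The determinant of a positive semi-definite matrix is the product of its eigenvalues, so multiplying the $n$ inequalities yields $\det M \geq \det N$.

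For part (1), the hypothesis $|Mv| \leq |Nv|$ for all $v \in \R^n$ is equivalent to
\[
\langle M^T M v, v\rangle \;\leq\; \langle N^T N v, v\rangle \qquad \forall v \in \R^n,
\]
i.e.\ $N^T N \geq M^T M \geq 0$. Applying part (2) to this pair of positive semi-definite matrices gives $\det(N^T N) \geq \det(M^T M)$. Since $\det(A^T A) = (\det A)^2$ for any square matrix $A$, taking square roots yields $|\det N| \geq |\det M|$.

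I do not anticipate any serious obstacle; both claims are elementary consequences of the spectral theorem. The only point that might normally require care is the degenerate case when $N$ (or $M$) is singular, but this is absorbed automatically: the min-max characterization treats singular and non-singular matrices uniformly, and in part (1) the reduction to $M^T M$ and $N^T N$ removes any sign or invertibility issue from the outset.
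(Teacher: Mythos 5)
Your proof is correct and rests on the same standard linear-algebra facts the paper invokes: the paper disposes of (1) by citing the singular-value decomposition and of (2) by citing the determinant monotonicity for the positive semi-definite order, while you prove (2) via Courant--Fischer eigenvalue monotonicity and then deduce (1) from (2) through the Gram matrices $M^TM \le N^TN$, which is just the SVD argument in slightly different clothing. No gaps.
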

\begin{proof}
    Property~(1) follows from the singular-value decomposition. Property~(2) is Hadamard inequality. 
\end{proof}

The following Jacobian estimate is an immediate consequence of \cref{lem:EstimateDifferential} and \cref{lem:CompareDeterminants}.

\begin{corollary}[Jacobian estimate]\label{cor:JacobianEstimates}
    If $x\in F$ and $S_M$ is differentiable at $x$, then
    \begin{equation}
        J_{S_M}(x)=|\det(dS_M)_x|\leq e^{n(n+1)\kappa D}|\det A|=e^{n(n+1)\kappa D}|GK(x)|.
    \end{equation}
\end{corollary}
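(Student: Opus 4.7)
The plan is to derive the Jacobian bound directly from Lemma 5.1 (the pointwise differential estimate) and Lemma 5.2 (two algebraic facts about determinants), once we install the key geometric input available at every $x \in F$: the second-order touching inequality $\langle A(w), w \rangle \geq \nabla^2 B_{S_M(x)}(w, w)$ for all $w \in T_xM$. This contact comparison is the only ingredient not already packaged in the preceding lemmas, and it is the step I expect to be the main (and really the only) obstacle; everything else is mechanical composition.

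To prove the touching inequality, I would fix $x \in F$, set $v = S_M(x)$, and note that $x \in F_v$ is a maximum of $B_v|_M$ with $\nabla B_v(x) = \nu(x)$ (the latter is the defining relation $S_M(x) = v$). Along any smooth curve $\eta \subset M$ with $\eta(0) = x$ and $\eta'(0) = w$, the second derivative $(B_v \circ \eta)''(0)$ is non-positive, and expanding it using the identity $\langle \nu, \nabla_w w \rangle = -\langle A(w), w \rangle$ (obtained by differentiating $\langle \nu, \eta'\rangle \equiv 0$) yields exactly the claimed inequality. Next, differentiating $|\nabla B_v|^2 \equiv 1$ shows $\nabla B_v(x) = \nu(x)$ lies in $\ker \nabla^2 B_v(x)$, so the Hessian restricts to a symmetric operator on $T_xM$, and convexity of $B_v$ (Proposition 3.2(2)) gives $\nabla^2 B_v(x) \geq 0$ there. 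Combined with the touching inequality, this produces the chain of symmetric operators on $T_xM$
\[
A \;\geq\; A - \nabla^2 B_v(x) \;\geq\; 0.
\]

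To assemble the estimate, Lemma 5.1 gives $|dS_M(w)| \leq e^{(n+1)\kappa D}\,|(A - \nabla^2 B_v)(w)|$ for every $w \in T_xM$, so Lemma 5.2(1), applied to the matrices of $dS_M$ and $e^{(n+1)\kappa D}(A - \nabla^2 B_v)$ in an orthonormal basis of $T_xM$, yields
\[
|\det dS_M(x)| \;\leq\; e^{n(n+1)\kappa D}\,\det\bigl(A - \nabla^2 B_v(x)\bigr).
\]
The chain above is precisely the hypothesis of Lemma 5.2(2), hence $\det(A - \nabla^2 B_v(x)) \leq \det A$; and because $A \geq 0$ on $T_xM$, we have $\det A = |\det A| = |GK(x)|$, which completes the bound.
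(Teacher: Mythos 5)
Your proof is correct and follows essentially the same route as the paper: establish $A \geq A - \nabla^2 B_{S_M(x)} \geq 0$ on $T_xM$ at the contact point, then combine \cref{lem:EstimateDifferential} with the two parts of \cref{lem:CompareDeterminants}. You derive the contact inequality explicitly via the second-derivative test at the maximum of $B_v|_M$ (and carefully note that $\nu \in \ker\nabla^2 B_v$ so the Hessian restricts to $T_xM$), where the paper simply invokes the supporting-hypersurface property, but this is the same argument in more detail.
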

\begin{proof}
     Suppose $x\in F_v$. Then by definition of $F_v$, $\nabla B_v(x)=\nu(x)$ and thus $S_M(x)=v$. Furthermore, $x$ lies on the hypersurface $B_v^{-1}(c_v)$. By the definition of $c_v$, $B_v^{-1}(c_v)$ is a supporting hypersurface of $M$ at $x$ and hence $A-\nabla^2B_v\ge 0$ at $x$. The convexity of the Busemann function $B_v$ (cf. \cref{pro:RegularityBusemann} property~(3)) implies  $\nabla^2B_v\ge 0$ at $x$. Therefore, $A\ge A-\nabla^2B_v\ge 0$ at $x$. The estimate then follows from (1) and (2) of \cref{lem:CompareDeterminants}.
\end{proof}

\section{Proof of the main results}\label{sec:ProofMain}

\begin{proof}[Proof of \cref{thm:Main}]
    Since $S_M$ is a Lipschitz map (cf. \cref{thm:LipschitzGaussMap}) and surjective on $F$ (cf. \cref{rmk:PropertiesSetF}), the area formula for Lipschitz maps (cf. \cite[P47, (8.4)]{Sim83}) can be applied and it yields
    \begin{align*}
        \area(\mathbb{S}^n)=\area(S_M(F))\leq&\int_FJ_{S_M}(x)d\area_M.
    \end{align*}
    Observe that \cref{cor:JacobianEstimates} gives the estimate of the right hand side:
    \begin{align*}
        \int_FJ_{S_M}(x)d\area_M\leq e^{n(n+1)\kappa D}\int_F|GK(x)|d\area_M\leq e^{n(n+1)\kappa D}\int_M|GK(x)|d\area_M.
    \end{align*}
    The proof is completed.
\end{proof}

To prove \cref{cor:IsoperimetricIneq}, we follow the argument in \cite{Kle92}. Let $E_0\subseteq N$ be a compact domain with smooth boundary. We consider the isoperimetric profile
\begin{align}
I_{E_0}(V) = &\inf\{\operatorname{area}(\partial E): E\subseteq N \text{ is compact} \notag \\
&\quad \text{with smooth boundary, and } \operatorname{vol}(E)=V\}. 
\end{align}
The following theorem says that in low dimensions, $I_{E_0}(V)$ is attained at some smooth domain. 

\begin{theorem}[\text{\cite[Theorem 1.62]{Rit23}}]\label{lem:MinimizingProblem}
Let $N^{n+1}$ be a Riemannian manifold of dimension $n+1\leq7$, and $E_0\subseteq N^{n+1}$ a compact domain with smooth boundary. Then for any $V\in(0,\operatorname{vol}(E_0))$, there exists some $E\subseteq E_0$ with $C^{1,1}$ boundary such that $\operatorname{vol}(E)=V$ and that $\operatorname{area}(\partial E)=I_{E_0}(V)$. 
\end{theorem}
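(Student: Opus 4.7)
The plan is to apply the direct method of the calculus of variations in the class of finite-perimeter sets, followed by the standard interior and boundary regularity theory from geometric measure theory. I would enlarge the admissible class to all Caccioppoli subsets $E \subseteq E_0$ with $\vol(E) = V$, replacing $\area(\partial E)$ by the perimeter functional $P(E)$. For a minimizing sequence $\{E_k\}$, the uniform bound on $P(E_k)$ together with the fact that each $E_k$ is contained in the compact set $E_0$ gives a uniform $BV$ bound on the characteristic functions $\chi_{E_k}$. The $BV$-compactness theorem then extracts an $L^1$-convergent subsequence $\chi_{E_k} \to \chi_E$; continuity of volume and lower semicontinuity of perimeter produce a finite-perimeter minimizer $E \subseteq E_0$ with $\vol(E)=V$ and $P(E)=I_{E_0}(V)$.

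Next I would promote $E$ to have $C^{1,1}$ boundary. In $\interior(E_0)$, the set $E$ is an almost-minimizer of the perimeter: the volume constraint contributes only a bounded bulk perturbation through the Lagrange multiplier. By the De Giorgi--Federer regularity theory, the reduced boundary $\partial^* E$ is then a smooth constant-mean-curvature hypersurface, and the singular set $\partial E \setminus \partial^* E$ has Hausdorff dimension at most $n - 7$, via Federer's dimension-reduction argument combined with Simons's theorem on the nonexistence of nontrivial area-minimizing cones in low ambient dimensions. The assumption $n+1 \leq 7$, i.e.\ $n \leq 6$, forces the singular set to be empty, so $\partial E \cap \interior(E_0)$ is smooth.

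At contact points $\partial E \cap \partial E_0$, the constraint $E \subseteq E_0$ acts as a smooth one-sided obstacle, turning the problem into an obstacle problem for the area functional. Here I would adapt the obstacle-regularity theory---Caffarelli's $C^{1,1}$ regularity, carried out for perimeter minimizers against a smooth obstacle by Gr\"uter, Tamanini, and in the form phrased here by Ritor\'e---to show that $\partial E$ is $C^{1,1}$ across the free boundary separating contact from non-contact. The main obstacle of the argument is exactly this boundary-regularity step: existence is routine and interior regularity is classical once $n \leq 6$ rules out singular minimizing cones, but establishing the sharp $C^{1,1}$ regularity at $\partial E \cap \partial E_0$ requires a careful barrier analysis, with the sharpness being forced by the jump between the mean curvature of $\partial E_0$ on the contact set and the constant Lagrange multiplier on its complement.
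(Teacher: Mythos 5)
The paper does not prove this statement at all: it is quoted verbatim as an external result, namely Theorem 1.62 in Ritor\'e's book, so there is no internal proof to compare against. Your sketch is essentially the standard geometric-measure-theory argument underlying that cited theorem, and it is correct in outline: direct method in the class of finite-perimeter subsets of $E_0$, interior regularity for volume-constrained perimeter minimizers with the singular set ruled out in ambient dimension $n+1\leq 7$, and $C^{1,1}$ regularity at the contact set with the obstacle $\partial E_0$. You have also correctly identified where the real content lies (the sharp $C^{1,1}$ obstacle regularity) and why the regularity cannot be improved past $C^{1,1}$ there.

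Two points deserve more care if you were to write this out. First, to invoke the De Giorgi--Federer--Simons regularity theory you must convert the volume-constrained minimizer into an unconstrained $(\Lambda,r_0)$-minimizer (or almost-minimizer); the phrase ``bounded bulk perturbation through the Lagrange multiplier'' is the right idea, but the actual argument is a volume-adjustment lemma (compensating a small volume change near one boundary point by an opposite change elsewhere at perimeter cost $\Lambda\,|\Delta V|$), and this is a genuine step, not a remark. Second, all of the regularity theory is stated in Euclidean space; one needs the routine but nontrivial observation that in normal coordinates the Riemannian perimeter functional is a $C^{1}$-perturbation of the Euclidean one, so that almost-minimizers in $(N,g)$ are almost-minimizers in the Euclidean sense and the dimension bound $\dim_{\mathcal H}(\partial E\setminus\partial^*E)\leq (n+1)-8$ still applies. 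Neither issue is a gap in the sense of a wrong idea; both are standard, which is presumably why the paper simply cites Ritor\'e rather than reproducing the argument.
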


Let $E$ be a compact subset in $N$. We say that $S$ is a smooth supporting hypersurface for $E$ at $p\in E$ if $S$ is smooth, and $S\cap E=\{p\}$, and $E$ lies on the same side of $S$ as the inward normal vector at $p$. Denote by $\mathscr S(E,p)$ the collection of smooth supporting hypersurfaces for $E$ at $p$. Recall that the mean curvature measures the growth of area. Therefore we take
\begin{equation}
H_E = \sup \{H_S(p): p\in E, S\in\mathscr S(E,p)\}, 
\end{equation}
where $H_S(p)$ is the mean curvature of $S$ at $p$. This notion makes sense of mean curvature of a nonsmooth subset.

\begin{lemma}\label{lem:MeanCurvatureEstimate}
For any compact domain $E_0\subseteq N$ there holds
\[
H_{E}\geq e^{-2(n+1)\kappa\diam E} \overline H(\operatorname{area}(\partial E)), 
\]
where $\overline H(A)$ denotes the mean curvature of geodesic sphere in $\R^{n+1}$ with area $A$. 
\end{lemma}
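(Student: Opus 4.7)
The plan is to combine the Willmore-type consequence of \cref{thm:Main} with a supporting-sphere construction at a point of high classical mean curvature. The Willmore-type inequality $\int_{\partial E} |H/n|^n\,d\area \ge e^{-n(n+1)\kappa D}\area(\mathbb S^n)$, valid on smooth $\partial E$ (and extended to the $C^{1,1}$ case by outer-parallel approximation $E_\epsilon = \{x: d(x,E) \le \epsilon\}$ and letting $\epsilon\to 0$), immediately yields by an $L^n$-to-$L^\infty$ pigeonhole argument a point $p^*\in\partial E$ with $|H_{\partial E}(p^*)| \ge e^{-(n+1)\kappa D}\,\overline H(\area(\partial E))$.

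At $p^*$, the goal is to exhibit a smooth supporting hypersurface $S\in\mathscr S(E,p^*)$ whose mean curvature absorbs only an additional factor of $e^{-(n+1)\kappa D}$. A natural candidate is a geodesic sphere tangent to $\partial E$ at $p^*$ from outside, with radius taken just above $1/\lambda_{\max}(A_{\partial E}(p^*))$. By Hessian comparison in a Hadamard manifold of sectional curvature $\ge -\kappa^2$, the mean curvature of such a sphere at $p^*$ is controlled from below by $n\lambda_{\max}(A_{\partial E}(p^*))$ up to a Jacobi-field/Riccati correction along the normal geodesic connecting $p^*$ to the sphere's center. The needed quantitative control on this comparison, over a normal geodesic whose length is at most $D$, is precisely the exponential Lipschitz estimate of \cref{lem:LipschitzGxo}, yielding the second factor $e^{-(n+1)\kappa D}$ as the price of transporting shape operators across distance $D$ in $N$.

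Multiplying the two bounds gives $H_E \ge H_S(p^*) \ge e^{-2(n+1)\kappa D}\overline H(\area(\partial E))$. The main obstacle is the second step: one must verify that the chosen geodesic sphere actually satisfies $S\cap E = \{p^*\}$ globally (and not merely locally at $p^*$), for which a separate compactness argument at the ``sharpest'' outward direction is needed, and one must quantify the Riccati comparison on the normal geodesic precisely enough to reproduce the factor $e^{-(n+1)\kappa D}$ predicted by \cref{lem:LipschitzGxo}. The key geometric input for both is \cref{lem:HessianEstimateGeneral}, which is exactly what powers the Lipschitz Gauss-map estimate used earlier.
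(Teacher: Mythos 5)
There is a genuine gap, and it sits exactly where you flagged ``the main obstacle.'' Your strategy selects $p^*\in\partial E$ by pigeonhole applied to the Willmore integral, i.e.\ solely by the condition that $|H_{\partial E}(p^*)|$ is large. But $H_E$ is defined as a supremum over \emph{supporting} hypersurfaces $S$ with $S\cap E=\{p\}$, and a point where $|H_{\partial E}|$ is large need not admit any supporting hypersurface at all: it can lie at the bottom of an inward concavity of $E$ (think of a domain with a deep thin inward spike), where every hypersurface tangent to $\partial E$ from outside immediately re-enters $E$. So the set where the Willmore integrand concentrates and the set of points admitting supporting hypersurfaces can be disjoint, and no choice of radius for your tangent geodesic sphere repairs this --- it is not a compactness technicality but a failure of the approach. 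This is precisely why the paper (following Kleiner) passes to the convex hull $D_0$ of $E$ and its outer parallel hypersurfaces $C_s=\partial D_s$: over the contact set $r_s^{-1}(C_0\cap\partial E)$ one automatically obtains, after pushing $C_s$ back in by $s$, smooth supporting hypersurfaces of $E$, giving the pointwise bound $0\le H_{C_s}\le H_{E}-(\operatorname{Ricci}_-)s$ there; the total curvature estimate of \cref{thm:Main} is then applied to the convex surface $C_s$ (not to $\partial E$), where $GK_{C_s}\le n^{-n}H_{C_s}^n$ holds by AM--GM, and the contribution of $C_s$ away from the contact set is shown to vanish as $s\to 0$. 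The factor $2$ in the final exponent comes from the single geometric fact $\diam D_0\le 2\diam E$ (balls are convex in a Hadamard manifold), not from multiplying two separate exponential losses.

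A second, independent problem is your appeal to \cref{lem:LipschitzGxo} to ``transport shape operators across distance $D$'' and extract the second factor $e^{-(n+1)\kappa D}$. That lemma compares gradients of Busemann functions based at $o$ evaluated at two points; it says nothing about the Riccati comparison for geodesic spheres. The standard comparison for a geodesic sphere of radius $r$ in a Hadamard manifold with $-\kappa^2\le K\le 0$ gives $n/r\le H\le n\kappa\coth(\kappa r)$, with no exponential dependence on the diameter, so the quantity you hope to produce is not what the comparison yields. As written, the second factor in your product is unsupported, and the agreement of your final exponent with the lemma's is numerological rather than structural.
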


\begin{proof}
Denote $E_0=E$ and let $D_0$ be the convex hull of $E_0$, and $D_s$ the $s$-neighborhood of $D_0$. Then $D_s$ is convex  and the boundary $C_s=\p D_s$ is $C^{1,1}$ for each $s>0$. Denote by $r_s:C_s\to C_0$ the nearest point retraction, and $\operatorname{Ricci}_-$ the infimum of Ricci curvature in $D_1$. Then as in \cite{Kle92}, for almost every $p\in r_s^{-1}(C_0\cap \partial E_0)$ we have 
\[
0 \leq H_{C_s}(p) \leq H_{E_0} - (\operatorname{Ricci}_-)s. 
\]
By regularization, \cref{thm:Main} holds for $C^{1,1}$ hypersurfaces; whence
\begin{align*}
&\quad e^{-n(n+1)\kappa\diam D_s} \operatorname{area}(\mathbb S^n) \leq \int_{C_s} GK_{C_s} \\
&= \int_{r_s^{-1}(C_0\cap\partial E_0)}GK_{C_s} + \int_{C_s\setminus r_s^{-1}(C_0\cap\partial E_0)}GK_{C_s} \\
&\leq \int_{r_s^{-1}(C_0\cap\partial E_0)}n^{-n}H_{C_s}^n + \int_{C_s\setminus r_s^{-1}(C_0\cap\partial E_0)}GK_{C_s} \\
&\leq \int_{r_s^{-1}(C_0\cap\partial E_0)}n^{-n}(H_{E_0}-(\operatorname{Ricci}_-)s)^n + \int_{C_s\setminus r_s^{-1}(C_0\cap\partial E_0)}GK_{C_s} \\
&\leq \operatorname{area}(r_s^{-1}(C_0\cap\partial E_0))n^{-n}(H_{E_0}-(\operatorname{Ricci}_-)s)^n + \int_{C_s\setminus r_s^{-1}(C_0\cap\partial E_0)}GK_{C_s}. 
\end{align*}
Sending $s\to0$, we have 
\[
\operatorname{area}(r_s^{-1}(\partial E_0))\to\operatorname{area}(C_0\cap\partial E_0) \quad \text{ and } \quad \int_{C_s\setminus r_s^{-1}(C_0\cap\partial E_0)}GK_{C_s} \to 0. 
\]
It follows that
\[
H_{E_0}\geq n e^{-(n+1)\kappa\diam D_0}\operatorname{area}(\mathbb S^n)^\frac{1}{n}\operatorname{area}(\partial E_0)^{-\frac{1}{n}} = e^{-(n+1)\kappa\diam D_0}\overline H(\operatorname{area}(\partial E_0)). 
\]
We complete the proof by noting that $\operatorname{diam}D_0\leq2\operatorname{diam}E_0$. 
\end{proof}

\begin{proof}[Proof of \cref{cor:IsoperimetricIneq}]
We fix any smooth domain $E_0\subseteq N$ with smooth boundary and take $V_0=\operatorname{vol}(E_0)$. Now for any $V\in(0,V_0)$, by \cref{lem:MinimizingProblem} there exists some $E\subseteq E_0$ with $C^{1,1}$ boundary such that
\[
\operatorname{area}(\partial E) = I_{E_0}(V) \quad \text{ and } \quad \operatorname{vol}(E) = V. 
\]
By \cref{lem:MeanCurvatureEstimate}, we derive
\begin{align*}
H_{E} &\geq e^{-2(n+1)\kappa\diam E} \overline H(\operatorname{area}(\partial E)) \\
&= ne^{-2(n+1)\kappa\diam E} \operatorname{area}(\mathbb S^n)^\frac{1}{n} \operatorname{area}(\partial E)^{-\frac{1}{n}} \\
&\geq ne^{-2(n+1)\kappa\diam E_0} \operatorname{area}(\mathbb S^n)^\frac{1}{n} I_{E_0}(V)^{-\frac{1}{n}}.
\end{align*}
By Lemma 10 in \cite{Kle92} (which holds in arbitrary dimension) we see that the left derivative of $I_{E_0}$ satisfies
\begin{align*}
D_-I_{E_0}(V) &\geq H_{E} \geq ne^{-2(n+1)\kappa\diam E_0} \operatorname{area}(\mathbb S^n)^\frac{1}{n} I_{E_0}(V)^{-\frac{1}{n}}
\end{align*}
for any $V\in(0,V_0)$. We integrate and deduce by continuity of $I_{E_0}$ that
\[
I_{E_0}(V)^\frac{n+1}{n} \geq (n+1)e^{-2(n+1)\kappa\diam E_0} \operatorname{area}(\mathbb S^n)^\frac{1}{n} V
\]
for any $V\in[0,V_0]$. In particular, we have isoperimetric inequality
\begin{align*}
\operatorname{area}(\partial E_0)^\frac{n+1}{n} &\geq I_{E_0}(V_0)^\frac{n+1}{n} \geq (n+1)e^{-2(n+1)\kappa\diam E_0} \operatorname{area}(\mathbb S^n)^\frac{1}{n} V_0 \\
&= (n+1)e^{-2(n+1)\kappa\diam E_0} \operatorname{area}(\mathbb S^n)^\frac{1}{n} \operatorname{vol}(E_0), 
\end{align*}
completing the proof. 
\end{proof}

\nocite{*}
\bibliographystyle{alpha_name-year-title}
\bibliography{References}
\end{document}